 \theoremstyle{definition}
 \theoremstyle{remark}
 \numberwithin{equation}{section}
\newtheorem{theorem}{Theorem}[section]
\newtheorem{lemma}{Lemma}[section]
\theoremstyle{remark}
\theoremstyle{remark}
\theoremstyle{definition}
\newtheorem{defi}{Definition}[section]
\DeclareMathOperator{\tr}{trace}
\DeclareMathOperator*{\Ric}{Ric}
\DeclareMathOperator{\Span}{Span}
\newcommand{\co}{\nabla}
\newcommand{\p}{{\Pi_k}}
\newcommand{\nn}{(n_1,\ldots,n_k)}
\newcommand{\n}{(n_1,\ldots,n_k)}
\newcommand{\x}{\mathfrak{X}}
\begin{document}

%
%
%
%
%
%
%
%
%

\title[Chen's inequality for C-totally real submanifolds in ...]
 {Chen's inequality for C-totally real submanifolds in a generalized  $(\kappa ,\mu)$-space form}

\author[M. Faghfouri]{Morteza Faghfouri}

\address{Department of Pure Mathematics,  Faculty of Mathematical Sciences, University of Tabriz, Tabriz, Iran.}

\email{faghfouri@tabrizu.ac.ir}

\author[N. Ghaffarzadeh]{Narges Ghaffarzadeh}
\address{Department of Pure Mathematics,  Faculty of Mathematical Sciences, University of Tabriz, Tabriz, Iran.}

\email{n.ghaffarzadeh@tabrizu.ac.ir}
\subjclass{53C40; 53C42; 53D10.}

\keywords{ generalized Sasakian-space-form, C-totally real submanifold,$\delta$-invariant, $k$-Ricci curvature, $(\kappa,\mu)$-space forms, Chen's inequality}


\begin{abstract}
In this paper, we obtain a basic Chen's inequality for a C-totally real submanifold in a generalized $(\kappa ,\mu)$-contact space forms involving intrinsic invariants, namely the scalar curvature and the sectional curvatures of the submanifold on left hand side and the main extrinsic invariant, namely the squared mean curvature on the right hand side. Inequalities between the squared mean curvature, $k$-Ricci curvature and  Ricci curvature are also obtained.
\end{abstract}

\maketitle
\section{Introduction}

In the theory of submanifolds, the study  of immersibility of a Riemannian manifold in a Euclidean space is one of the fundamental problems.
According to the well-known theorem of J. Nash in 1956 \cite{NashJohn:TheImbeddingProblem}, every Riemannian manifold can be isometrically
embedded in some Euclidean spaces with sufficiently high codimension.

In \cite{chen:Somepinching}, B.-Y. Chen defined a Riemannian invariant $\delta_M=\tau-\inf K$ for any
Riemannian manifold M, where $\tau$ is the scalar curvature of $M$ and $(\inf K)(p)=\inf\{K(\pi)|$ plane sections $\pi \subset T_pM\}$. Also, in \cite{chen:Somepinching}, Chen obtained a necessary condition for the existence of minimal isometric immersion from a given Riemannian manifold into Euclidean space and established a sharp inequality for a submanifold in a real space form using the scalar curvature and the sectional curvature and squared mean curvature.
In \cite{Chen:RelationsBetweenRicci}, he gave a sharp relationship between the squared mean curvature and
the Ricci curvature for the submanifolds in a real space form.
This inequalities are also sharp, and many nice classes of submanifolds realize equality in inequalities.
In \cite{Chen:Somenewobstructionstominimal}, B. -Y. Chen introduced new types of curvature invariants, by defining two strings of scalar-valued Riemannian curvature functions, namely $\delta\nn$ and $\~\delta\nn.$ The first string of $\delta$-invariants, $\delta\nn$, extend naturally the Riemannian invariant introduced in \cite{chen:Somepinching}.

 Many papers  studied Chen invariants and inequalities, like complex space forms, cosymplectic space forms, warped product spaces and Sasakian space forms \cite{AlegreCarriazoKimYoon:chenInequalityforGeneralizedSpaceForm,faghfouriGhaffarzade:invariant,faghfouriGhaffarzade:doublywarped,faghfouri:ondoublyimmersion,TripathiHong:Riccicurvatureofsubmanifolds,Kim:Choi:ABasicInequalityCosymplectic,Ion:Riccicurvatureofsubmanifolds,tripathi:c-totallyrealwarped}.
In \cite{carriazo.Tripathi:Generalizedspaceforms},  A. Carriazo, V. Mart{\'{\i}}n Molina and M.M. Tripathi introduce generalized $(\kappa,\mu)$-space forms as an almost contact metric manifold $(\~M,\phi,\xi,\eta,\langle,\rangle)$ whose
curvature tensor can be written as
\begin{align*}
R=f_1R_1+f_2R_2+f_3R_3+f_4R_4+f_5R_5+f_6R_6,
\end{align*}
where $f_1, f_2, f_3,f_4,f_5,f_6$  are differentiable functions on $\~M$.
 generalized $(\kappa,\mu)$-space forms with divided $R_5$ whenever the curvature tensor  can be written as
\begin{align*}
R=f_1R_1+f_2R_2+f_3R_3+f_4R_4+f_{5,1}R_{5,1}+f_{5,2}R_{5,2}+f_6R_6,
\end{align*}
where $f_1, f_2, f_3,f_4,f_{5,1}, f_{5,2},f_6$  are differentiable functions on $\~M$. Obviously, any  generalized Sasakian $(\kappa,\mu)$-space form is  a generalized Sasakian $(\kappa,\mu)$-space form  with divided $R_5.$

M.M. Tripathi and J.S. Kim \cite[Theorem 5.2 ]{TripathiKin:C-totalyKMcontact} studied
the relationship between the scalar curvature, the  sectional curvature and   the squared mean curvature for $C$-totally real submanifolds in a $(\kappa,\mu)$-contact space forms.

   In this paper,  we improved Theorem 5.2 in \cite{TripathiKin:C-totalyKMcontact} for  a $C$-totally real submanifold of generalized $(\kappa,\mu)$-space forms with divided $R_5$. In  section 2, we recall some necessary details background on Riemannian invariant on Riemannian manifolds, contact metric manifold, $C$-totally real submanifolds and contact metric manifolds. In section 3, we establish a basic Chen's inequality for $C$-totally real submanifolds in  a generalized  $(\kappa,\mu)$-space form  with divided $R_5.$ Sections 4 and 5 contain an inequality between the squared mean curvature, $k$-Ricci curvature and Ricci curvature. Finely, In section 6, we apply these results to get corresponding results for $C$-totally real submanifolds in a generalized $(\kappa,\mu)$-contact space forms $\~M(f_1, \ldots ,f_6)$ with $f_3=f_1-1$.
\section{Preliminaries}
The Riemannian invariants are the intrinsic characteristics of a Riemannian manifold. The scalar curvature is the most studied scalar valued Riemannian invariant on Riemannian manifolds.

 Let $M$ be an $n$-dimensional Riemannian manifold.  We denote by $K(\pi)$ the sectional curvature of $M$ for a plane section $\pi$  in $T_p M$. For any orthonormal basis $\lbrace e_1,\ldots,e_n\rbrace$ for $T_p M$,
The scalar curvature
$\tau(p)$ of $M$ at $p$ is defined by
$\tau(p)=\sum_{1\leq i<j\leq n} K(e_i\wedge e_j),$
where $K(e_i\wedge e_j)$ is the sectional curvature of the plane section spanned by $e_i$ and $e_j$ at $p\in M$.

Let $\Pi_k$ be a $k$-plane section of $T_pM$ and $\{e_1,\ldots , e_k\}$
any orthonormal basis of $\Pi_k$. The scalar curvature $\tau(\Pi_k)$ of $\Pi_k$ is given by
\begin{align}\label{eq:11}
\tau(\Pi_k)=\sum_{1\leq i<j\leq k}K(e_i\wedge e_j).
\end{align}

The scalar curvature $\tau(p)$ of $M$ at $p$ is identical with the scalar curvature of
the tangent space $T_pM$ of $M$ at $p$, that is, $ \tau(p)=\tau(T_pM)$.
Also, we denote by $(\inf K)(p)=\inf\{K(\pi)|\pi\subset T_pM, \dim\pi=2\}$, and introduce the first Chen invariant $\delta_M(p)=\tau(p)-(\inf K)(p)$,
which is certainly an intrinsic character of $M$.

Suppose $\p$ is a $k$-plane section of $T_pM$ and $U$ a unit vector in $\p$. We choose an orthonormal basis $\{e_1, \ldots,e_k\}$ of $\p$ such that $e_1=U$.
The Ricci curvature $\Ric_{{\Pi_k}}$ of ${\Pi_k}$ at $U$ is given by
\begin{align}\label{eq:10}
\Ric_{{\Pi_k}}(U)=K_{12}+\cdots+K_{1k},
\end{align}
where $K_{ij}$ is the sectional curvature of the plane section spanned by $e_i$ and $e_j$.
The $\Ric_\p(U)$ is called a $k$-Ricci curvature. For each integer $k$, $2\leq k\leq n$, the Riemannian invariant $\theta_k$ on $n$-dimensional Riemannian Manifold $M$ is defined by
\begin{align}\label{eq:12}
\theta_k(p)=\left(\dfrac{1}{k-1}\right)\inf_{{\Pi_k},X}\Ric_{{\Pi_k}}(X), p\in M,
\end{align}
where ${\Pi_k}$ is $k$-plane sections in $T_pM$ and $X$ is unit vector in ${\Pi_k}$ \cite{Chen:RelationsBetweenRicci}.

For an integer $k\geq0$ denote by $\mathcal{S}(n, k)$ the finite set consisting of unordered $k$-tuples
$(n_1,\ldots,n_k)$ of integers $\geq2$ satisfying $n_1 < n$ and $n_1 +\cdots+n_k \leq n$. Denote
by $\mathcal{S}(n)$ the set of unordered $k$-tuples with $k\geq0$ for a fixed $n.$

For each $k$-tuple $\nn\in\mathcal{S}(n), \delta\nn(p)$ is defined by
\begin{align}
\delta\nn(p)=\tau-\inf\{\tau(L_1)+\cdots+\tau(L_k)\},\label{eq:deltainf}
\end{align}
where $L_1,\ldots,L_k$ run over all $k$ mutually orthogonal subspaces of $T_pM$ such that $\dim L_j=n_j, j=1,\ldots,k.$
Similarly, $\~\delta\nn(p)$ is defined by
\begin{align}
\~\delta\nn(p)=\tau-\sup\{\tau(L_1)+\cdots+\tau(L_k)\}.\label{eq:deltasup}
\end{align}
Obviously, $\delta(\emptyset)=\~\delta(\emptyset)=\tau$ for $k=0.$ It is also clear that
\[\delta\nn\geq\~\delta\nn\]
for any $k$-tuple $\nn\in\mathcal{S}(n).$

\begin{defi}[\cite{Chen:Somenewobstructionstominimal}]
A Riemannian $n$-manifold $M$ is called an $\mathcal{S}\nn$-space for a given $k$-tuple $\nn\in\mathcal{S}(n)$ if it satisfies
\(\delta\nn=\~\delta\nn,\)
identically.
\end{defi}
For each $\nn\in \mathcal{S}(n)$, let $c\nn$ and $b\nn$ denote
\begin{align}
\begin{split}
c\nn=&\dfrac{n^2(n+k-1-\sum_{j=1}^{k}n_j)}{2(n+k-\sum_{j=1}^kn_j)},\\
b\nn =&\dfrac{1}{2}n(n-1)-\dfrac{1}{2}\sum_{j=1}^k n_j(n_j-1).
\end{split}
\end{align}

Let $M$ be an $n$-dimensional submanifold in a manifold $\~M$ equipped with a Riemannian metric $\langle.,.\rangle$. The Gauss and Weingarten formulae are given respectively by
$\~\co_X Y=\co_X Y+\sigma(X,Y)$ and $\~\co_X N=-A_N X+\co^{\bot} _X N$ for all $X,Y\in\x(M)$ and $N\in T^{\bot}M$, where $\~\co$, $\co$ and $\co^{\bot}$ are Riemannian, induced Riemannian and induced normal connections in $\~M$, $M$ and the normal bundle
$T^{\bot}M$ of $M$ respectively, and $\sigma$ is the second fundamental form related to the shape operator $A_N$ in the direction of $N$ by $\langle\sigma(X,Y),N\rangle=\langle A_N X,Y\rangle$. Then, the Gauss equation is given by
\begin{align}\label{030}\begin{split}
\~R(X,Y,Z,W)=R(X,Y,Z,W)-\langle\sigma(X,W),\sigma(Y,Z)\rangle\\
+\langle\sigma(X,Z),\sigma(Y,W)\rangle\end{split}
\end{align}
for all $X,Y,Z,W\in\x(M)$, where $\~R$ and $R$ are the curvature tensors of $\~M$ and $M$ respectively. The mean curvature vector $H$ is expressed by $nH=\tr(\sigma)$.
The submanifold $M$ is totally geodesic in $\~M$ if $\sigma=0$, and minimal if $H=0$. If
$\sigma(X,Y)=\langle X,Y\rangle H$, for all $X,Y\in\x(M)$, then $M$ is totally umbilical.
The  null space of $M$ at a point $p\in M$ is  $\mathcal{N}_p=\{X\in T_pM \,|\, \sigma(X,Y)=0,\, \forall Y\in T_p(M)\}.$
A manifold $\~M$ is called an almost contact metric manifold if there is an almost contact metric structure $(\phi,\xi,\eta,\langle,\rangle)$ consisting of a $(1, 1)$ tensor field $\phi$, a vector field $\xi$, a $1$-form $\eta$ and a compatible Riemannian metric $\langle.,.\rangle$ satisfying
\begin{gather}
\phi^2=-I+\eta\otimes\xi, \eta(\xi)=1, \eta\circ\phi=0, \phi\xi=0, \\
\langle X,Y\rangle=\langle\phi X,\phi Y\rangle+\eta(X)\eta(Y),\\
\langle X,\xi\rangle=\eta(X),\langle X,\phi Y\rangle=-\langle\phi X,Y\rangle,
\end{gather}
for all $X,Y\in\x(\~M)$.
An almost contact metric structure becomes a contact metric structure if $d\eta=\Phi$, where $\Phi(X,Y)=\langle X,\phi Y\rangle$ is the fundamental $2$-form of $\~M$.

An almost contact metric structure of $\~M$ is said
to be normal if the Nijenhuis torsion $[\phi,\phi]=-2d\eta\otimes\xi$. A normal
contact metric manifold is called a Sasakian manifold. It can be proved that
an almost contact metric manifold is Sasakian if and only if
$(\co_X\phi)Y=\langle X,Y\rangle\xi-\eta(Y)X,$
for any $X, Y\in \x(\~M)$ or  equivalently, a contact metric structure is a Sasakian structure if and only if $\~R$ satisfies
$\~R(X,Y)\xi=\eta(Y)X-\eta(X)Y,$
for $X,Y\in\x(\~M)$.
In a contact metric manifold $\~M$, the $(1, 1)$-tensor field $h$ is  defined by $2h =\mathcal{L}_\xi\phi$, which is the Lie derivative of $\phi$ in the characteristic direction $\phi$. It is
symmetric and satisfies
\begin{gather*}
h\xi=0,\quad h\phi+\phi h=0, \quad  \tr(h)=\tr(\phi h)=0,
\~\co\xi=-\phi-\phi h,
\end{gather*}
where $\~\co$ is Levi-Civita connection.

Let $(M, \phi,\xi,\eta,\langle,\rangle)$ be an almost contact metric manifold. A $\phi$-section of $M$ at $p\in M$ is a section $\Pi\subset T_p\~M$ spanned by a unit vector $X_p$ orthogonal to $\xi_p$, and $\phi X_p$. The $\phi$-sectional curvature of $\Pi$ is defined by $\~K(X,\phi X)=\~R(X,\phi X,\phi X,X)$. A Sasakian manifold with constant $\phi$-sectional curvature $c$
is called a Sasakian space form and is denoted by
$\~M(c).$
A contact metric manifold $(\~M,\phi,\xi,\eta,\langle,\rangle)$ is said to be a $(\kappa,\mu)$-contact manifold if its curvature tensor satisfies the condition
$\~R(X,Y)\xi=\kappa(\eta(Y)X-\eta(X)Y)+\mu(\eta(Y)hX-\eta(X)hY),$
where  $\kappa$ and $\mu$ are real constant numbers.
If the $(\kappa,\mu)$-contact metric manifold $\~M$ has constant $\phi$-sectional curvature $c$, then it is said  to be a $(\kappa,\mu)$-contact space form.
\begin{defi}(\cite{carriazo.Tripathi:Generalizedspaceforms})
We say that an almost contact metric manifold $(\~M,\phi,\xi,\eta,\langle,\rangle)$ is a generalized $(\kappa,\mu)$-space form if there exist functions $f_1, f_2, f_3,f_4,f_5,f_6$ defined on $\~M$ such that
\begin{align}\label{029}
R=f_1R_1+f_2R_2+f_3R_3+f_4R_4+f_5R_5+f_6R_6,
\end{align}
where $R_1, R_2, R_3, R_4, R_5, R_6$ are the following tensors
\begin{align*}
&R_1(X,Y)Z=\langle Y,Z\rangle X-\langle X,Z\rangle Y,\\
&R_2(X,Y)Z=\langle X,\phi Z\rangle\phi Y-\langle Y,\phi Z\rangle\phi X+2\langle X,\phi Y\rangle\phi Z,\\
&R_3(X,Y)Z=\eta(X)\eta(Z)Y-\eta(Y)\eta(Z)X+\langle X,Z\rangle\eta(Y)\xi-\langle Y,Z\rangle\eta(X)\xi,\\
&R_4(X,Y)Z=\langle Y,Z\rangle hX-\langle X,Z\rangle hY+\langle hY,Z\rangle X-\langle hX,Z\rangle Y,\\
&R_5(X,Y)Z=\langle hY,Z\rangle hX-\langle hX,Z\rangle hY+\langle\phi hX, Z\rangle\phi hY-\langle\phi hY,Z\rangle\phi hX,\\
&R_6(X,Y)Z=\eta(X)\eta(Z)hY-\eta(Y)\eta(Z)hX+\langle hX,Z\rangle\eta(Y)\xi-\langle hY,Z\rangle\eta(X)\xi.
\end{align*}
for all vector fields $X, Y, Z $ on $\~M$, where $2h=\mathcal{L}_\xi \phi$ and $\mathcal{L}$  is the  Lie derivative. We will denote such a manifold by $\~M(f_1,\ldots,f_6)$.
\end{defi}
For example, $(\kappa,\mu)$-contact space forms are  generalized $(\kappa,\mu)$-space forms, with constant functions
$$f_1=\frac{c+3}{4}, f_2=\frac{c-1}{4}, f_3=\frac{c+3}{4}-\kappa, f_4=1, f_5=\frac{1}{2}, f_6=1-\mu. $$
And also, generalized Sasakian space forms $\~M(f_1,f_2,f_3)$  are generalized $(\kappa,\mu)$-space forms, with $f_4=f_5=f_6=0$ \cite{alegre.blair:GeneralizedSasakianspaceforms}.

\begin{defi}[\cite{carriazo:Generalizeddivided}]
An almost contact metric manifold $(\~M,\phi,\xi,\eta,\langle,\rangle)$ is a generalized $(\kappa,\mu)$-space form with divided $R_5$ if there exist function $f_1, f_2, f_3,$ $ f_4, f_{5,1}, f_{5,2}, f_6$ defined on $\~M$ such that
\begin{align}\label{028}
R=f_1R_1+f_2R_2+f_3R_3+f_4R_4+f_{5,1}R_{5,1}+f_{5,2}R_{5,2}+f_6R_6,
\end{align}
where $R_{5,1}, R_{5,2}$ are the following tensors
\begin{align*}
&R_{5,1}(X,Y)Z=\langle hY,Z\rangle hX-\langle hX,Z\rangle hY,\\
&R_{5,2}(X,Y)Z=\langle\phi hY, Z\rangle\phi hX-\langle\phi hX,Z\rangle\phi hY,
\end{align*}
for all vector fields $X, Y, Z $ on $\~M$.
\end{defi}
We will denote such a manifold by $\~M(f_1,f_2,f_3,$ $f_4,f_{5,1}, f_{5,2},f_6)$. It follows that $R_5 = R_{5,1}-R_{5,2}$.  It is obvious that, if $\~M(f_1,\ldots,f_6)$  is  a generalized $(\kappa,\mu)$-space form then $\~M$ is a generalized $(\kappa,\mu)$-space form with divided $R_5$ with $f_{5,1}=f_5$ and $f_{5,2}=-f_5$.
A non-Sasakian $(\kappa,\mu)$-space form is the generalized $(\kappa,\mu)$-space form with divided $R_5$ with $$f_1=\frac{2-\mu}{2}, f_2=-\frac{\mu}{2},f_3=\frac{2-\mu-2\kappa}{2},f_4=1, f_{5,1}=\frac{2-\mu}{2(1-\kappa)}, f_{5,2}=\frac{2\kappa-\mu}{2(1-\kappa)},$$ and $f_6=1-\mu$ but not the generalized $(\kappa,\mu)$-space form.

\begin{theorem}[\cite{carriazo.Tripathi:Generalizedspaceforms}]\label{21}
Let $ M(f_1, \ldots, f_6)$ be a generalized $(\kappa ,\mu)$-space form. If $M$ is a Sasakian manifold, then  $f_2=f_3=f_1-1.$
\end{theorem}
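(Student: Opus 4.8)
The plan is to use the Sasakian structure to collapse the curvature expression and then recover the three functions by testing the curvature against the Reeb vector field $\xi$ and against $\phi X$. First I would recall that every Sasakian manifold is K-contact, so the tensor $h$ defined by $2h=\mathcal{L}_\xi\phi$ vanishes identically. Since each term of $R_4$, $R_5$ and $R_6$ carries a factor of $h$, these three tensors vanish, and \eqref{029} collapses to
\begin{align*}
R=f_1R_1+f_2R_2+f_3R_3,
\end{align*}
which isolates exactly the three functions named in the statement.

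To pin down $f_3$ I would evaluate both sides on $\xi$. Direct substitution gives $R_1(X,Y)\xi=\eta(Y)X-\eta(X)Y$, then $R_2(X,Y)\xi=0$ because $\phi\xi=0$, and $R_3(X,Y)\xi=\eta(X)Y-\eta(Y)X$, so that
\begin{align*}
R(X,Y)\xi=(f_1-f_3)\bigl(\eta(Y)X-\eta(X)Y\bigr).
\end{align*}
Comparing with the Sasakian identity $\widetilde R(X,Y)\xi=\eta(Y)X-\eta(X)Y$ and choosing $Y=\xi$ with $X\perp\xi$ forces $f_1-f_3=1$, that is $f_3=f_1-1$.

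The harder part is $f_2$, which never appears in the $\xi$-direction of the curvature and so cannot be caught by the previous step. Here I would use the characteristic Sasakian relation $(\nabla_X\phi)Y=\langle X,Y\rangle\xi-\eta(Y)X$; differentiating once more and applying the Ricci identity for the tensor field $\phi$ produces the curvature identity
\begin{align*}
R(X,Y)\phi Z-\phi R(X,Y)Z=\langle X,Z\rangle\phi Y-\langle Y,Z\rangle\phi X+\langle\phi X,Z\rangle Y-\langle\phi Y,Z\rangle X.
\end{align*}
I would then restrict $X,Y,Z$ to be orthogonal to $\xi$, where $R_3$ drops out and the curvature reduces to $f_1R_1+f_2R_2$, and expand the left-hand side term by term using $\phi^2Z=-Z$. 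Matching the coefficients of the independent directions $X,Y,\phi X,\phi Y$ against the right-hand side yields $(f_1-f_2)\langle Y,\phi Z\rangle=\langle Y,\phi Z\rangle$ together with its companions, hence $f_1-f_2=1$, i.e. $f_2=f_1-1$, completing the proof.

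The main obstacle is this last step: one must correctly derive the Sasakian curvature identity and then execute the coefficient comparison cleanly. A secondary but genuine point is dimension. Separating the $f_1R_1$ and $f_2R_2$ contributions requires four independent vectors $X,Y,\phi X,\phi Y$ inside the contact distribution, so the argument for $f_2$ needs $\dim\~M\geq5$; in the three-dimensional case $R_2$ is proportional to $R_1$ on the contact distribution and $f_2$ is not individually determined, so the statement is to be read in the relevant higher-dimensional setting.
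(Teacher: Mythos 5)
Your argument is correct, and it is essentially the standard one; note, however, that the paper does not prove this statement at all --- it is imported verbatim from the cited reference, so there is no in-paper proof to compare against. Your two-stage reduction is exactly what is needed: Sasakian $\Rightarrow$ K-contact $\Rightarrow h=0$ kills $R_4,R_5,R_6$, and then $R(X,Y)\xi=(f_1-f_3)\bigl(\eta(Y)X-\eta(X)Y\bigr)$ against the Sasakian identity gives $f_3=f_1-1$. For $f_2$, your curvature identity checks out: a direct computation of $(\nabla^2_{X,Y}\phi)Z=-\langle Y,Z\rangle\phi X+\langle\phi X,Z\rangle Y$ and the Ricci identity give precisely the displayed relation, and on the contact distribution one finds
\begin{equation*}
R(X,Y)\phi Z-\phi R(X,Y)Z=(f_1-f_2)\bigl(\langle Y,\phi Z\rangle X-\langle X,\phi Z\rangle Y-\langle Y,Z\rangle\phi X+\langle X,Z\rangle\phi Y\bigr),
\end{equation*}
whose bracket equals the right-hand side of your identity, so $f_1-f_2=1$ once the bracket is nonzero (e.g.\ $Z=Y$ a unit vector and $X$ a unit vector orthogonal to $\xi$, $Y$, $\phi Y$, which makes the bracket $-\phi X\neq0$). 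Your closing caveat is genuine and worth stating explicitly: that choice of $X$ requires $\dim\geq5$, and in dimension $3$ one has $R_2=3R_1$ on the contact distribution, so $f_2$ is not determined by the curvature there; the conclusion $f_2=f_1-1$ should be read in dimension at least five (or as a normalization in dimension three).
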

\begin{theorem}[\cite{carriazo.Tripathi:Generalizedspaceforms}]\label{21}
Let $ M(f_1, \ldots, f_6)$ be a generalized $(\kappa ,\mu)$-space form. If $M$ is a contact metric manifold with $f_3=f_1-1$, then it is a Sasakian manifold.
\end{theorem}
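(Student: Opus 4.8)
The plan is to read off the curvature term $R(X,Y)\xi$ directly from the defining expression $R=f_1R_1+\cdots+f_6R_6$ and then apply the characterization of Sasakian structures recalled in the Preliminaries: among contact metric manifolds, $M$ is Sasakian if and only if $R(X,Y)\xi=\eta(Y)X-\eta(X)Y$. So the whole argument reduces to showing that the hypothesis $f_3=f_1-1$ forces $R(X,Y)\xi$ to take exactly this form.

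First I would evaluate each $R_i(X,Y)\xi$ using the structure relations $\phi\xi=0$, $\eta(\xi)=1$, $h\xi=0$, $\langle X,\xi\rangle=\eta(X)$, together with the symmetry of $h$ (so that $\langle hX,\xi\rangle=\langle X,h\xi\rangle=0$ and $\langle\phi hX,\xi\rangle=-\langle hX,\phi\xi\rangle=0$). A short computation gives $R_1(X,Y)\xi=\eta(Y)X-\eta(X)Y$, then $R_2(X,Y)\xi=R_5(X,Y)\xi=0$, $R_3(X,Y)\xi=-(\eta(Y)X-\eta(X)Y)$, $R_4(X,Y)\xi=\eta(Y)hX-\eta(X)hY$, and $R_6(X,Y)\xi=-(\eta(Y)hX-\eta(X)hY)$. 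Summing these with the coefficients $f_i$ yields
\[
R(X,Y)\xi=(f_1-f_3)\bigl(\eta(Y)X-\eta(X)Y\bigr)+(f_4-f_6)\bigl(\eta(Y)hX-\eta(X)hY\bigr).
\]
In particular $M$ is automatically a contact manifold of $(\kappa,\mu)$-type with $\kappa=f_1-f_3$ and $\mu=f_4-f_6$ (here merely functions, which will not matter since the rest of the argument is pointwise).

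The core step is to deduce $h=0$ from $f_1-f_3=1$. Setting $Y=\xi$ in the displayed identity and using $X-\eta(X)\xi=-\phi^2X$ produces the operator $\ell X:=R(X,\xi)\xi=-(f_1-f_3)\phi^2X+(f_4-f_6)hX$. I would then invoke the classical contact-metric identity $\phi\ell\phi-\ell=2(h^2+\phi^2)$. Substituting the expression for $\ell$ and simplifying with $h\phi+\phi h=0$, $\phi^2h=-h$, and $\phi^4=-\phi^2$ makes the $h$-terms cancel (one finds $\phi\ell\phi-\ell=2(f_1-f_3)\phi^2$), which forces $h^2=(f_1-f_3-1)\phi^2$. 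Equivalently, this is the standard structure relation $h^2=(\kappa-1)\phi^2$ for $(\kappa,\mu)$-contact manifolds. With $f_3=f_1-1$ the right-hand side vanishes, so $h^2=0$; since $h$ is symmetric, $\langle h^2X,X\rangle=|hX|^2=0$, hence $h=0$.

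Finally, feeding $h=0$ and $f_1-f_3=1$ back into the formula for $R(X,Y)\xi$ leaves precisely $R(X,Y)\xi=\eta(Y)X-\eta(X)Y$, which is the stated characterization of a Sasakian structure among contact metric manifolds; therefore $M$ is Sasakian. I expect the middle step to be the main obstacle: the information on $R(X,Y)\xi$ must be routed through the operator $\ell=R(\cdot,\xi)\xi$ and combined with the classical relation $\phi\ell\phi-\ell=2(h^2+\phi^2)$ in order to extract the algebraic constraint $h^2=(\kappa-1)\phi^2$; everything else is bookkeeping with the structure equations. It is worth emphasizing that the whole argument is pointwise, so it goes through even though $f_1-f_3$ is a priori only a function rather than a constant.
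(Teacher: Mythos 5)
Your argument is correct. Note, however, that the paper you were given does not prove this statement at all: it is quoted verbatim from the cited reference of Carriazo, Mart\'{\i}n Molina and Tripathi, so there is no in-paper proof to compare against. Your route is the standard one and, as far as I can tell, essentially the one used in that source: compute $R(X,Y)\xi=(f_1-f_3)\bigl(\eta(Y)X-\eta(X)Y\bigr)+(f_4-f_6)\bigl(\eta(Y)hX-\eta(X)hY\bigr)$ from the defining decomposition (your evaluations of the six $R_i(X,Y)\xi$ are all correct), observe that the manifold therefore satisfies a $(\kappa,\mu)$-nullity condition with $\kappa=f_1-f_3$, extract $h^2=(\kappa-1)\phi^2$, and conclude $h=0$ when $f_3=f_1-1$, after which $R(X,Y)\xi=\eta(Y)X-\eta(X)Y$ gives the Sasakian condition stated in the Preliminaries. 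The only ingredient not available inside this paper is the classical contact-metric identity $\phi\ell\phi-\ell=2(h^2+\phi^2)$ for $\ell=R(\cdot,\xi)\xi$; that is a well-known result of Blair (and of Blair--Koufogiorgos--Papantoniou), and your use of it, including the sign bookkeeping via $\phi^4=-\phi^2$ and $\phi h\phi=h$, checks out. Your remark that the argument is pointwise, so the functions $f_i$ need not be constant, is also correct and worth keeping.
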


Let $M$ be a submanifold in a contact manifold $\tilde{M}$. $M$ is called a $C$-totally real submanifold if every tangent vector of $M$ belongs to the contact distribution \cite{yamaguchi:c-totallyrealsubmanifolds}.
Thus,  $M$  is a $C$-totally real submanifold if $\xi$ is normal to $M$. A submanifold $M$ in an almost contact metric
manifold  $\tilde{M}$ is called anti-invariant  if $\phi(TM)\subset T^\bot(M)$ \cite{yano:Antiinvariantsubmanifolds}. If a submanifold
$M$ in a contact metric manifold is normal to the structure vector field $\xi$,
then it is anti-invariant. Thus $C$-totally real submanifolds in a contact
metric manifold are anti-invariant, as they are normal to $\xi.$
For a $C$-totally real submanifold in a contact metric manifold we have
$\langle A_\xi X,Y\rangle=-\langle\~\co_X\xi,Y\rangle=\langle\phi X+\phi hX,Y\rangle,$ which implies that
\begin{align}\label{4.1}
A_\xi=(\phi h)^T,
\end{align}
where $(\phi h)^T$ is the tangential part of $\phi h X$ for all $X\in\x(M)$.

We state the following  Lemmas of Chen for later uses.
\begin{lemma}[B. Y. Chen \cite{chen:Somepinching}]\label{lem:cheninequality}
If $a_1,\ldots,a_n,a_{n+1}$ are $n+1 (n>1)$ real numbers such that
$$\dfrac{1}{n-1}(\sum_{i=1}^{n}a_i)^{2}=\sum_{i=1}^{n}a_{i}^{2}+a_{n+1},$$
then $2a_1a_2\geq a_{n+1}$, with equality holding if and only if $a_1+a_2=a_3=\ldots=a_n$.
\end{lemma}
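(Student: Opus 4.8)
The plan is to reduce the stated inequality to the elementary Cauchy--Schwarz inequality (equivalently, the quadratic-mean versus arithmetic-mean inequality) applied to a suitably regrouped collection of $n-1$ numbers. First I would write $S=\sum_{i=1}^{n}a_i$ and use the hypothesis to solve for the last entry, namely $a_{n+1}=\frac{1}{n-1}S^2-\sum_{i=1}^{n}a_i^2$. Substituting this into the desired conclusion $2a_1a_2\ge a_{n+1}$ and rearranging, the claim becomes equivalent to
$$\sum_{i=1}^{n}a_i^2+2a_1a_2\ge\frac{1}{n-1}S^2.$$

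The key observation is that the left-hand side collapses to a sum of $n-1$ squares, since $\sum_{i=1}^{n}a_i^2+2a_1a_2=(a_1+a_2)^2+a_3^2+\cdots+a_n^2$. Introducing the $n-1$ real numbers $b_1=a_1+a_2$ and $b_j=a_{j+1}$ for $2\le j\le n-1$, I note that $\sum_{j=1}^{n-1}b_j=S$ and $\sum_{j=1}^{n-1}b_j^2=(a_1+a_2)^2+a_3^2+\cdots+a_n^2$. Hence the inequality to be proved is exactly
$$(n-1)\sum_{j=1}^{n-1}b_j^2\ge\left(\sum_{j=1}^{n-1}b_j\right)^{2},$$
which is the Cauchy--Schwarz inequality applied to the vectors $(b_1,\ldots,b_{n-1})$ and $(1,\ldots,1)$.

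Finally I would read off the equality case. Equality in Cauchy--Schwarz forces $(b_1,\ldots,b_{n-1})$ to be proportional to $(1,\ldots,1)$, that is $b_1=b_2=\cdots=b_{n-1}$, which in the original variables is precisely $a_1+a_2=a_3=\cdots=a_n$. Conversely, this condition makes all the $b_j$ equal and hence yields equality throughout, completing the characterization.

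As for the main difficulty, there is essentially no analytic obstacle, since the statement is purely algebraic. The only non-mechanical step is spotting the regrouping $b_1=a_1+a_2$, $b_j=a_{j+1}$, which simultaneously absorbs the cross term $2a_1a_2$ into a perfect square and lowers the count from $n$ to $n-1$, so that the factor $\frac{1}{n-1}$ matches the Cauchy--Schwarz constant exactly; once that substitution is in place the proof is immediate.
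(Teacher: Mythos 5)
Your argument is correct: the regrouping $b_1=a_1+a_2$, $b_j=a_{j+1}$ reduces the claim to $(n-1)\sum_{j=1}^{n-1}b_j^2\ge\bigl(\sum_{j=1}^{n-1}b_j\bigr)^2$, which is Cauchy--Schwarz, and the equality analysis is handled properly. The paper itself gives no proof --- it quotes the lemma from Chen's 1993 article --- and your argument is essentially the classical one used there, so there is nothing to flag.
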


\section{Certain basic inequalities}
In this section, We establish an inequality for submanifolds in a  generalized $(\kappa,\mu)$-contact space form with divided $R_5$,
$\~M(f_1, \ldots ,f_6)$ involving intrinsic invariants, namely the sectional curvature and the scalar curvature and the extrinsic invariant, namely the squared mean curvature.

\begin{lemma}
In an $n$-dimensional $C$-totally real submanifold $M$ in a $(2m+1)$-dimensional generalized $(\kappa,\mu)$-contact space form with divided $R_5$,
$\~M(f_1, \ldots ,f_6)$ such that $\xi\in\x(M)^\bot$, the scalar curvature and the squared mean curvature satisfy
\begin{align}
2\tau&=n(n-1)f_{1}+2(n-1)f_{4}\tr(h^{T})\nonumber\\
&+f_{5,1}\left\lbrace(\tr(h^{T}))^{2}-\Vert h^{T}\Vert^{2}\right\rbrace -f_{5,2}\left\lbrace \Vert(\phi h)^{T}\Vert^{2}-(\tr(\phi h)^{T})^{2}\right\rbrace \label{eq:17}\\
&+n^{2}\Vert H\Vert^{2}-\Vert\sigma\Vert^{2},\nonumber
\end{align}
where
\begin{gather*}
\Vert Q\Vert^{2}=\sum_{i,j=1}^{n}\langle e_{i},Qe_{j}\rangle ^{2},\quad Q\in \lbrace(\phi h)^{T},h^{T}\rbrace,\\
\quad\Vert\sigma\Vert^{2}=\sum_{i,j=1}^{n}\langle\sigma(e_{i},e_{j}),\sigma(e_{i},e_{j})\rangle
\end{gather*}
and $(\phi h)^{T} X$and $h^{T} X$ are the tangential parts of $\phi h X$ and $hX$ respectively for $X\in \x(M)$.
\end{lemma}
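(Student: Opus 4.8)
The plan is to combine the Gauss equation \eqref{030} with the explicit form \eqref{028} of the ambient curvature tensor, summing over an adapted orthonormal frame. First I would fix $p\in M$ and an orthonormal basis $\{e_1,\ldots,e_n\}$ of $T_pM$, and write the scalar curvature as $2\tau=\sum_{i,j=1}^n R(e_i,e_j,e_j,e_i)$, the diagonal terms vanishing and each off-diagonal pair counted twice by the symmetry of $R$. Applying \eqref{030} to each summand gives $R(e_i,e_j,e_j,e_i)=\~R(e_i,e_j,e_j,e_i)+\langle\sigma(e_i,e_i),\sigma(e_j,e_j)\rangle-\langle\sigma(e_i,e_j),\sigma(e_i,e_j)\rangle$. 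Summing, the two extrinsic terms assemble at once into $n^2\Vert H\Vert^2-\Vert\sigma\Vert^2$ via $\sum_i\sigma(e_i,e_i)=nH$ and the definition of $\Vert\sigma\Vert^2$, reproducing the last line of \eqref{eq:17}.

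The core of the argument is then to evaluate $\sum_{i,j}\~R(e_i,e_j,e_j,e_i)$ from \eqref{028}, term by term over $R_1,\ldots,R_6$. Here the two defining properties of a C-totally real submanifold do the heavy lifting: since $\xi\in\Gamma(TM^\bot)$ we have $\eta(e_i)=\langle e_i,\xi\rangle=0$ for every tangent frame vector, which annihilates all contributions of $R_3$ and $R_6$; and since such a submanifold is anti-invariant, $\phi e_j\in T^\bot M$, so $\langle e_i,\phi e_j\rangle=0$ and the whole of $R_2$ drops out. What survives are only the contributions of $R_1$, $R_4$, $R_{5,1}$ and $R_{5,2}$.

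Each survivor I would then compute directly. For $R_1$ a short count gives $\sum_{i,j}(\langle e_j,e_j\rangle\langle e_i,e_i\rangle-\langle e_i,e_j\rangle^2)=n(n-1)$, producing $n(n-1)f_1$. Writing $h^T_{ij}=\langle he_i,e_j\rangle=\langle h^T e_i,e_j\rangle$ for the tangential projection (symmetric because $h$ is), the $R_4$ sum telescopes to $2(n-1)\tr(h^T)$. For $R_{5,1}$ the two terms give $(\sum_i h^T_{ii})(\sum_j h^T_{jj})-\sum_{i,j}(h^T_{ij})^2=(\tr(h^T))^2-\Vert h^T\Vert^2$; and identically, with $(\phi h)^T_{ij}=\langle\phi h\,e_i,e_j\rangle$, which is symmetric by \eqref{4.1} since $(\phi h)^T=A_\xi$, the $R_{5,2}$ sum yields $(\tr(\phi h)^T)^2-\Vert(\phi h)^T\Vert^2$, appearing with the sign $-f_{5,2}\{\Vert(\phi h)^T\Vert^2-(\tr(\phi h)^T)^2\}$ recorded in \eqref{eq:17}. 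Collecting all pieces gives the stated identity.

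The computation is essentially mechanical once the frame is fixed, so I do not anticipate a genuine obstacle; the only points demanding care are the bookkeeping of which frame-dependent inner products survive (precisely, recognizing that $\langle he_i,e_j\rangle$ and $\langle\phi h\,e_i,e_j\rangle$ reduce to their tangential projections and are symmetric, so the mixed sums collapse into traces and Hilbert--Schmidt norms) and the consistent sign convention of \eqref{030}, which governs whether the extrinsic part appears as $n^2\Vert H\Vert^2-\Vert\sigma\Vert^2$ or with the opposite sign.
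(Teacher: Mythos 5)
Your proposal is correct and follows essentially the same route as the paper: sum the Gauss equation over an adapted orthonormal tangent frame to isolate $n^{2}\Vert H\Vert^{2}-\Vert\sigma\Vert^{2}$, then evaluate $\sum_{i\neq j}\~R(e_i,e_j,e_j,e_i)$ term by term from \eqref{028}, with only $R_1$, $R_4$, $R_{5,1}$, $R_{5,2}$ surviving. Your explicit justification that $R_2$, $R_3$, $R_6$ drop out (via $\eta(e_i)=0$ and anti-invariance) and that $(\phi h)^{T}=A_\xi$ is symmetric is left implicit in the paper, but the argument is the same.
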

\begin{proof}
We choose a local orthonormal frame $\lbrace e_1,\ldots ,e_n\rbrace$ such that $e_1,\ldots ,e_n$ are tangent to $M$, $e_{n+1}$ is parallel to the mean curvature vector $H$. Then from equation of Gauss \eqref{030}, we have
\begin{align}
K(e_{i}\wedge e_{j})=\~{K}(e_{i}\wedge e_{j})+\sum_{r=n+1}^{2m+1}(\sigma_{ii}^{r}\sigma_{jj}^{r}-(\sigma_{ij}^{r})^{2}).\label{022}
\end{align}
From equation (\ref{022}), we get
\begin{align}
 2\tau&=2\~{\tau}(T_{p}M)+n^{2}\Vert H\Vert^{2}-\Vert\sigma\Vert^{2},\label{001}
\end{align}
since
$$2\~{\tau}(T_{p}M)=\sum_{1\leqslant i\neq j\leqslant n}\~{K}(e_{i}\wedge e_{j})=\sum_{1\leqslant i\neq j\leqslant n}\~{R}(e_{i},e_{j},e_{j},e_{i}),$$
for to get $2\tau $ to be enough that we get $\sum_{1\leqslant i\neq j\leqslant n}\~{R}(e_i,e_j,e_j,e_i)$. From \eqref{028}, we have
\begin{align*}
\sum_{1\leqslant i\neq j\leqslant n}&\~{R}(e_{i},e_{j},e_{j},e_{i})=f_{1}\lbrace\sum_{1\leqslant i\neq j\leqslant n}\langle e_{j},e_{j}\rangle\langle e_{i},e_{i}\rangle -\sum_{1\leqslant i\neq j\leqslant n}\langle e_{i},e_{j}\rangle ^{2}\rbrace\\
&+f_{4}\lbrace\sum_{1\leqslant i\neq j\leqslant n} \langle e_{j},e_{j}\rangle\langle h^{T}e_{i},e_{i}\rangle-\sum_{1\leqslant i\neq j\leqslant n} \langle e_{i},e_{j}\rangle\langle h^{T}e_{j},e_{i}\rangle\\
&+\sum_{1\leqslant i\neq j\leqslant n} \langle h^{T}e_{j},e_{j}\rangle\langle e_{i},e_{i}\rangle-\sum_{1\leqslant i\neq j\leqslant n} \langle h^{T}e_{i},e_{j}\rangle\langle e_{j},e_{i}\rangle\rbrace\\
&+f_{5,1}\lbrace\sum_{1\leqslant i\neq j\leqslant n} \langle h^{T}e_{j},e_{j}\rangle\langle h^{T}e_{i},e_{i}\rangle-\sum_{1\leqslant i\neq j\leqslant n} \langle h^{T}e_{i},e_{j}\rangle\langle h^{T}e_{j},e_{i}\rangle\rbrace\\
&-f_{5,2}\lbrace\sum_{1\leqslant i\neq j\leqslant n} \langle (\phi h)^{T}e_{i},e_{j}\rangle\langle (\phi h)^{T}e_{j},e_{i}\rangle\nonumber\\
&\qquad\qquad-\sum_{1\leqslant i\neq j\leqslant n} \langle (\phi h)^{T}e_{j},e_{j}\rangle\langle (\phi h)^{T}e_{i},e_{i}\rangle\rbrace,
\end{align*}
or
\begin{align*}
2\~{\tau}(T_{p}M)=&\sum_{1\leqslant i\neq j\leqslant n}\~{R}(e_{i},e_{j},e_{j},e_{i})\\
=&n(n-1)f_{1}+2(n-1)f_{4}\tr(h^{T})\\
&+f_{5,1}\lbrace(\tr(h^{T}))^{2}-\sum_{1\leqslant i\neq j\leqslant n}\langle e_{i},h^{T}e_{j}\rangle^{2}\rbrace\\
&-f_{5,2}\lbrace\sum_{1\leqslant i\neq j\leqslant n}\langle e_{i},(\phi h)^{T}e_{j}\rangle^{2}-(\tr(\phi h)^{T})^{2}\rbrace.
\end{align*}
We obtain
\begin{align}\label{002}
\begin{split}
2\~{\tau}(T_{p}M)=&n(n-1)f_{1}+2(n-1)f_{4}\tr(h^{T})\\
&+f_{5,1}\left\lbrace(\tr(h^{T}))^{2}-\|h^{T}\Vert^{2}\right\rbrace\\
 &-f_{5,2}\left\lbrace \Vert(\phi h)^{T}\Vert^{2}-(\tr(\phi h)^{T})^{2}\right\rbrace.
\end{split}
\end{align}
Now if we put (\ref{002}) in (\ref{001}), obtain (\ref{eq:17}).
\end{proof}

\begin{theorem}\label{th:52}
Let $M$ be an $n$-dimensional $(n\geq3)$ $C$-totally real submanifold  in a $(2m+1)$-dimensional generalized $(\kappa,\mu)$-contact space form with divided $R_5$, $\~M(f_1, \ldots ,f_6)$. Then for each point $p\in M$ and each plane section $\pi\subset T_pM,$ we have
\begin{align}\label{eq:21}\begin{split}
\tau-K(\pi)&\leqslant\dfrac{n^{2}(n-2)}{2(n-1)}\Vert H\Vert^{2}+\dfrac{(n+1)(n-2)}{2}f_{1}\\
&+f_{4}\lbrace(n-1)\tr(h^{T})-\tr(h\vert_{\pi})\rbrace\\
&+\frac{1}{2}f_{5,1}\lbrace(\tr(h^{T}))^{2}-\Vert h^{T}\Vert^{2}-2\det (h\vert_{\pi})\rbrace\\
&-\frac{1}{2}f_{5,2}\lbrace\Vert(\phi h)^{T}\Vert^{2}-(\tr(\phi h)^{T})^{2}+2\det ((\phi h)\vert_{\pi})\rbrace.\end{split}
\end{align}
The equality in (\ref{eq:21}) holds at $p\in M$ if and only if there exist an orthonormal
basis $\lbrace e_1,\ldots ,e_n\rbrace$  of $T_p M$ and an orthonormal basis $\lbrace e_{n+1},\ldots,e_{2m+1}=\xi\rbrace$ of $T^{\perp}_p M$
such that ($\mathtt{a}$) $\pi= \Span \lbrace e_1, e_2\rbrace$ and ($\mathtt{b}$) the forms of shape operators $A_r\equiv A_{e_{r}}$, $r=n+1,\ldots,2m+1$, become
\begin{align}\label{eq:22}
A_{n+1}=
\begin{bmatrix}
~~~a&~~~~~~0&0~~~\\
~~~0&~~~~~~b&0~~~\\
~~~0&~~~~~~0&(a+b)I_{n-2}
\end{bmatrix},
\end{align}
\begin{align}\label{eq:23}
A_{r} =
\begin{bmatrix}
~~~c_{r}&~~~~~d_{r}&~~~0~~~~~\\
~~~d_{r}&~~~~~-c_{r}&~~~0~~~~~\\
~~~0&~~~~~0&~~~0_{n-2}
\end{bmatrix},~~~~r=n+2,\ldots,2m+1
\end{align}
\end{theorem}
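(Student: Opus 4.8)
The plan is to reduce the asserted inequality to the classical Chen estimate on the second fundamental form and then read off the equality case from the equality clauses of that estimate. The starting point is the identity $2\tau = 2\tilde\tau(T_pM) + n^2\|H\|^2 - \|\sigma\|^2$ recorded in \eqref{001}, together with the Lemma's expression \eqref{002} for $2\tilde\tau(T_pM)$.

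First I would compute the ambient sectional curvature $\tilde K(\pi)$ of the plane $\pi=\mathrm{Span}\{e_1,e_2\}$ directly from \eqref{028}. Because $M$ is C-totally real, every tangent vector is orthogonal to $\xi$, so $\eta$ annihilates $e_1,e_2$ and $\phi$ maps the tangent space into the normal bundle; hence the $R_2$, $R_3$, $R_6$ contributions vanish and only $R_1,R_4,R_{5,1},R_{5,2}$ survive, giving
\[
\tilde K(\pi)=f_1+f_4\tr(h\vert_\pi)+f_{5,1}\det(h\vert_\pi)-f_{5,2}\det((\phi h)\vert_\pi).
\]
Subtracting this from \eqref{002} and using $\tfrac{n(n-1)}{2}-1=\tfrac{(n+1)(n-2)}{2}$ shows that the whole block of $f_1,f_4,f_{5,1},f_{5,2}$ terms on the right of \eqref{eq:21} is exactly $\tilde\tau(T_pM)-\tilde K(\pi)$.

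Next, substituting the Gauss relation $K(\pi)=\tilde K(\pi)+\sum_{r}(\sigma_{11}^r\sigma_{22}^r-(\sigma_{12}^r)^2)$ together with \eqref{001}, I would show that \eqref{eq:21} is \emph{equivalent} to the purely extrinsic inequality
\[
\frac{n^2}{n-1}\|H\|^2-\|\sigma\|^2\ \leq\ 2\sum_{r=n+1}^{2m+1}\bigl(\sigma_{11}^r\sigma_{22}^r-(\sigma_{12}^r)^2\bigr).
\]
To prove this, choose $e_{n+1}$ parallel to $H$, so that $\sum_i\sigma_{ii}^r=0$ for $r>n+1$ and $n^2\|H\|^2=(\sum_i\sigma_{ii}^{n+1})^2$. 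Applying Lemma~\ref{lem:cheninequality} to $a_i=\sigma_{ii}^{n+1}$ controls the $r=n+1$ part, while for each $r>n+1$ completing the square produces $(\sigma_{11}^r+\sigma_{22}^r)^2$ plus manifestly nonnegative remainders; this is the classical Chen argument and yields the inequality.

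The main bookkeeping obstacle is matching the curvature combination in the second paragraph exactly, and, more delicately, extracting the normal forms \eqref{eq:22}--\eqref{eq:23} in the equality case. Equality forces the Chen-lemma equality $\sigma_{11}^{n+1}+\sigma_{22}^{n+1}=\sigma_{33}^{n+1}=\cdots=\sigma_{nn}^{n+1}$, the vanishing of every off-diagonal $\sigma_{ij}^{n+1}$ with $\{i,j\}\neq\{1,2\}$, and, for $r>n+1$, the conditions $\sigma_{11}^r=-\sigma_{22}^r$ with all remaining entries zero, which is precisely \eqref{eq:23}. The block $A_{n+1}\vert_\pi$ may still carry an off-diagonal entry, so I would finally rotate $\{e_1,e_2\}$ inside $\pi$ to diagonalize it; since a symmetric trace-free $2\times2$ matrix remains symmetric and trace-free under rotation and the rotation leaves $e_3,\dots,e_n$ untouched, the forms \eqref{eq:23} are preserved while $A_{n+1}$ takes the shape \eqref{eq:22}. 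The converse implication is a direct substitution of \eqref{eq:22}--\eqref{eq:23} into the extrinsic inequality, which turns both sides into the same quantity.
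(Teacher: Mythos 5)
Your proposal follows essentially the same route as the paper's proof: rewrite $\tau-K(\pi)$ using the scalar--curvature identity \eqref{001}--\eqref{002} and the Gauss equation, reduce everything to a purely extrinsic inequality, and close that with Lemma~\ref{lem:cheninequality} applied to the diagonal of $A_{n+1}$ (with $e_{n+1}$ parallel to $H$) together with completing the square in the remaining normal directions; your equality analysis, including the final rotation of $\{e_1,e_2\}$ inside $\pi$ to remove $\sigma_{12}^{n+1}$, is also the paper's. The one point you must fix is the sign of the $f_{5,2}$-term in your formula for $\tilde K(\pi)$: the curvature tensor carries $+f_{5,2}R_{5,2}$, and for tangent $e_1,e_2$ one has
\begin{align*}
\langle R_{5,2}(e_1,e_2)e_2,e_1\rangle
=\langle\phi he_1,e_1\rangle\langle\phi he_2,e_2\rangle-\langle\phi he_1,e_2\rangle\langle\phi he_2,e_1\rangle
=\det\bigl((\phi h)\vert_\pi\bigr),
\end{align*}
so the correct expression is $\tilde K(\pi)=f_1+f_4\tr(h\vert_\pi)+f_{5,1}\det(h\vert_\pi)+f_{5,2}\det((\phi h)\vert_\pi)$, in agreement with \eqref{eq:28}. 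With your minus sign, the curvature block would come out as $-\tfrac12 f_{5,2}\bigl\{\Vert(\phi h)^{T}\Vert^{2}-(\tr(\phi h)^{T})^{2}-2\det((\phi h)\vert_{\pi})\bigr\}$, which is not the right-hand side of \eqref{eq:21}; so as written your claimed identification of the $f$-block with $\tilde\tau(T_pM)-\tilde K(\pi)$ does not hold. Once the sign is corrected, that identification is exact and the rest of your argument goes through as stated.
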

\begin{proof}
We set
\begin{align}\label{eq:24}
\begin{split}
\rho =2\tau&-n(n-1)f_{1}-2(n-1)f_{4}\tr(h^{T})\\
&-f_{5,1}\left\lbrace(\tr(h^{T}))^{2}-\Vert h^{T}\Vert^{2}\right\rbrace \\
&+f_{5,2}\left\lbrace \Vert(\phi h)^{T}\Vert^{2}-(\tr(\phi h)^{T})^{2}\right\rbrace\\
&-\dfrac{n^{2}(n-2)}{n-1}\Vert H\Vert^{2}.
\end{split}
\end{align}
From (\ref{eq:17}) and (\ref{eq:24}), we get
\begin{align}\label{eq:25}
n^2\Vert H\Vert^2=(n-1)(\Vert \sigma\Vert^2+\rho).
\end{align}
Let $\pi\subset T_pM$ be a plane section. We choose an orthonormal basis $\{e_1,\ldots, e_n\}$ for $T_pM$
and an orthonormal basis $\{e_{n+1},\ldots, e_{2m}, e_{2m+1}\}$ for the normal space $(T_p^\bot M)$
at $p$ such
that $\pi = \Span \{e_1,e_2\}$, the mean curvature vector $H(p)$ is parallel to $e_{n+1}$ and $e_{2m+1} = \xi$ then the
equation (\ref{eq:25}) can be written as
\begin{align}\label{eq:26}\begin{split}
\left(\sum_{i=1}^{n}\sigma_{ii}^{n+1}\right)^2=(n-1)\bigg(\sum_{i=1}^{n}(\sigma_{ii}^{n+1})^2&+\sum_{i\neq j}(\sigma_{ij}^{n+1})^2\\
&+\sum_{r=n+2}^{2m+1}\sum_{i,j=1}^{n}(\sigma_{ii}^{r})^2+\rho\bigg).\end{split}
\end{align}
Applying Lemma \ref{lem:cheninequality}, from (\ref{eq:26}), we obtain
\begin{align}\label{eq:27}
2\sigma_{11}^{n+1}\sigma_{22}^{n+1}\geq \sum_{i\neq j}(\sigma_{ij}^{n+1})^2+\sum_{r=n+2}^{2m+1}\sum_{i,j=1}^{n}(\sigma_{ii}^{r})^2+\rho.
\end{align}
From equation (\ref{030}), we also have
\begin{align}\label{eq:28}\begin{split}
K(\pi)=f_1+f_4\tr(h|_\pi)+f_{5,1}\det(h|_\pi)+f_{5,2}\det((\phi h)|_\pi)\\
+\sigma_{11}^{n+1}\sigma_{22}^{n+1}-(\sigma_{12}^{n+1})^2+\sum_{r=n+2}^{2m+1}(\sigma_{11}^{r}\sigma_{22}^{r}-(\sigma_{12}^{r})^2),
\end{split}
\end{align}
which in view of (\ref{eq:27}) gives
\begin{align}\label{eq:29}
K(\pi)\geq f_1+f_4\tr(h|_\pi)+f_{5,1}\det(h|_\pi)+f_{5,2}\det((\phi h)|_\pi)+\dfrac{1}{2}\rho\nonumber\\
+\sum_{r=n+1}^{2m+1}\sum_{j>2}((\sigma_{1j}^{r})^2+(\sigma_{2j}^{r})^2)+\dfrac{1}{2}\sum_{i\neq j>2}(\sigma_{ij}^{n+1})^2\\
+\dfrac{1}{2}\sum_{r=n+2}^{2m+1}\sum_{i,j>2}(\sigma_{ij}^{r})^2+\sum_{r=n+2}^{2m+1}(\sigma_{11}^r+\sigma_{22}^r)^2,\nonumber
\end{align}
or
\begin{align}\label{eq:30}
K(\pi)\geq f_1+f_4\tr(h|_\pi)+f_{5,1}\det(h|_\pi)+f_{5,2}\det((\phi h)|_\pi)+\dfrac{1}{2}\rho.
\end{align}
In view of (\ref{eq:24}) and (\ref{eq:30}), we obtain (\ref{eq:21}).
If the equality in (\ref{eq:21}) holds, then the inequalities given by (\ref{eq:27}) and (\ref{eq:29}) become
equalities. In this case, we have
\begin{align}\label{eq:31}
\sigma_{1j}^{n+1}=0, \sigma_{2j}^{n+1}=0, \sigma_{ij}^{n+1}=0, i\neq j>2;\nonumber\\
\sigma_{1j}^{r}=\sigma_{2j}^{r}=\sigma_{ij}^{r}=0, r=n+2,\ldots, 2m+1; i,j=3,\ldots,n;\\
\sigma_{11}^{n+2}+\sigma_{22}^{n+2}=\cdots=\sigma_{11}^{2m+1}+\sigma_{22}^{2m+1}=0.\nonumber
\end{align}
Furthermore,
we may choose $e_1$ and $e_2$ so that $\sigma_{12}^{n+1}=0$. Moreover, by applying
Lemma \ref{lem:cheninequality}, we also have
\begin{align}\label{eq:32}
\sigma_{11}^{n+1}+\sigma_{22}^{n+1}=\sigma_{33}^{n+1}=\cdots=\sigma_{nn}^{n+1}.
\end{align}
Thus, after choosing a suitable orthonormal basis, the shape operator of $M$ becomes of
the form given by (\ref{eq:22}) and (\ref{eq:23}). The converse is straightforward.
\end{proof}

\section{Squared mean curvature and Ricci curvature}
Chen established a sharp relationship between the Ricci curvature and the squared
mean curvature for submanifolds in real space forms \cite{Chen:RelationsBetweenRicci}.
We prove similar inequalities for certain submanifolds of a generalized $(\kappa,\mu)$-contact space form with divided $R_5$, $\~M(f_1, \ldots ,f_6)$.
\begin{theorem}\label{th:61}
Let $M$ be an $n$-dimensional $(n\geq3)$ $C$-totally real submanifold  in a $(2m+1)$-dimensional generalized $(\kappa,\mu)$-contact space form with divided $R_5$, $\~M(f_1, \ldots ,f_6)$. Then for each point $p\in M$
\begin{enumerate}
  \item For all unit vector $U\in T_pM,$ we have
  \begin{align}\label{eq:33}
\Ric(U)\leq&\dfrac{1}{4}n^2\Vert H\Vert^2+(n-1)f_1+f_4(\tr(h^T)+(n-2)g(h^TU,U))\nonumber\\
&+f_{5,1}\left(\tr(h^T)g(h^TU,U)-\Vert h^TU\Vert^2\right)\\
&+f_{5,2}\left(\tr((\phi h)^T)g((\phi h)^TU,U)-\Vert (\phi h)^TU\Vert^2\right).\nonumber
  \end{align}
  \item For $H(p)=0,$ a unit tangent vector $U\in T_pM$ satisfies the equality case of \eqref{eq:33} if and only if $U$ belongs to the relative null space $\mathcal{N}_p.$
  \item the equality in \eqref{eq:33}
 holds identically for all unit tangent vectors at $p$ if and only if either $p$ is a totally geodesic point or $n=2$ and $p$ is a totally umbilical point.
  \end{enumerate}
\end{theorem}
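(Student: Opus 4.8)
The plan is to compute $\Ric(U)$ from the Gauss equation, split it into an ambient part (the curvature of $\~M$) and a second-fundamental-form part, match the ambient part to the four curvature terms on the right of \eqref{eq:33}, and bound the extrinsic part by $\tfrac14 n^2\|H\|^2$ via an elementary completing-the-square estimate. First I would fix $p\in M$ and a unit vector $U\in T_pM$, extend it to an orthonormal basis $\{e_1=U,e_2,\dots,e_n\}$ of $T_pM$, and choose an orthonormal normal frame $\{e_{n+1},\dots,e_{2m},e_{2m+1}=\xi\}$ (recall $\xi\in\Gamma(T^\perp M)$ since $M$ is C-totally real). Writing $\Ric(U)=\sum_{j=2}^n K(e_1\wedge e_j)$ and applying \eqref{030}, each term decomposes as $K(e_1\wedge e_j)=\~R(e_1,e_j,e_j,e_1)+\sum_r(\sigma_{11}^r\sigma_{jj}^r-(\sigma_{1j}^r)^2)$.

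Next I would evaluate the ambient sum $\sum_{j=2}^n\~R(e_1,e_j,e_j,e_1)$ using \eqref{028}. Because $M$ is C-totally real, $\eta$ vanishes on tangent vectors and $\langle e_i,\phi e_j\rangle=0$, so the $R_2$, $R_3$, $R_6$ contributions drop out and only the $f_1$, $f_4$, $f_{5,1}$, $f_{5,2}$ terms survive, each $\~R(e_1,e_j,e_j,e_1)$ taking exactly the ambient form appearing in \eqref{eq:28}. Summing over $j=2,\dots,n$, using $\sum_{j=2}^n\langle h^Te_j,e_j\rangle=\tr(h^T)-\langle h^TU,U\rangle$ together with the identity $\|h^TU\|^2=\langle h^TU,U\rangle^2+\sum_{j=2}^n\langle h^TU,e_j\rangle^2$ and its analogue for $(\phi h)^T$, collapses the ambient sum into precisely the $f_1$, $f_4$, $f_{5,1}$, $f_{5,2}$ terms on the right of \eqref{eq:33}. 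I expect this bookkeeping to be the only place requiring genuine care, though it is routine given the vanishing pattern already used in Lemma 3.1.

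For the extrinsic part, setting $nH^r=\sum_{i=1}^n\sigma_{ii}^r$ I would rewrite $\sum_{j=2}^n\sum_r(\sigma_{11}^r\sigma_{jj}^r-(\sigma_{1j}^r)^2)=\sum_r\bigl[\sigma_{11}^r(nH^r-\sigma_{11}^r)-\sum_{j\ge2}(\sigma_{1j}^r)^2\bigr]$. Discarding the nonpositive term $-\sum_{j\ge2}(\sigma_{1j}^r)^2$ and applying $x(c-x)\le\tfrac14 c^2$ with $x=\sigma_{11}^r$ and $c=nH^r$ yields the bound $\tfrac14\sum_r(nH^r)^2=\tfrac14 n^2\|H\|^2$. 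Combined with the ambient sum this gives \eqref{eq:33}.

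Finally, for the equality cases I would trace back the two estimates just used: equality in $x(c-x)\le\tfrac14 c^2$ forces $\sigma_{11}^r=\tfrac12 nH^r$, and discarding the off-diagonal term forces $\sigma_{1j}^r=0$ for all $j\ge2$ and all $r$. When $H(p)=0$ the right-hand side of \eqref{eq:33} reduces to the ambient sum and these conditions become $\sigma(U,e_j)=0$ for every $j$, i.e. $U\in\mathcal N_p$, proving part (2). For part (3), imposing equality for every unit vector—hence for each $e_i$—forces $\sigma$ to be diagonal in every normal direction with $\sigma_{ii}^r=\tfrac12 nH^r$; taking the trace gives $nH^r=\tfrac12 n^2H^r$, so for $n\ge3$ necessarily $H^r=0$ and $\sigma=0$ (totally geodesic), while $n=2$ instead yields $\sigma(X,Y)=\langle X,Y\rangle H$ (totally umbilical). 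Thus the inequality and both equality characterizations follow, with the ambient-curvature computation as the main obstacle.
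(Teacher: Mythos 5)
Your argument is correct and lands on exactly the same equality conditions as the paper, but it gets there by a more direct route. The paper derives \eqref{eq:33} by subtraction: it invokes the scalar-curvature lemma \eqref{eq:17}, combines it with the algebraic identity \eqref{eq:34} for $\Vert\sigma\Vert^2$ to obtain \eqref{eq:35}, separately computes $\sum_{2\le i<j\le n}K_{ij}$ via the Gauss equation in \eqref{eq:36}, and then reads off $\Ric(U)=\tau-\sum_{2\le i<j\le n}K_{ij}$ as \eqref{eq:37}. You instead compute $\Ric(U)=\sum_{j\ge 2}K(e_1\wedge e_j)$ head-on and bound the extrinsic part by $\sum_r\sigma_{11}^r(nH^r-\sigma_{11}^r)\le\tfrac14\sum_r(nH^r)^2$. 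These are the same estimate in disguise: since
\begin{align*}
\tfrac14(nH^r)^2-\sigma_{11}^r\bigl(nH^r-\sigma_{11}^r\bigr)=\tfrac14\bigl(2\sigma_{11}^r-nH^r\bigr)^2=\tfrac14\bigl(\sigma_{11}^r-\sigma_{22}^r-\cdots-\sigma_{nn}^r\bigr)^2,
\end{align*}
your completing-the-square discards precisely the nonnegative term appearing in \eqref{eq:37}, and your discarded $\sum_{j\ge2}(\sigma_{1j}^r)^2$ is the paper's other one; hence your equality conditions $\sigma_{11}^r=\sigma_{22}^r+\cdots+\sigma_{nn}^r$ and $\sigma_{1j}^r=0$ coincide with \eqref{eq:38}, and your handling of parts (2) and (3) (including the trace argument forcing $H^r=0$ when $n\neq2$) matches the paper's. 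What your organization buys is economy: no appeal to the global identity \eqref{eq:34} or to the scalar-curvature formula is needed, only the ambient sum over $j\ge2$, which you correctly reduce to the $f_1$, $f_4$, $f_{5,1}$, $f_{5,2}$ terms using that $\eta$ and $\langle\cdot\,,\phi\,\cdot\rangle$ vanish on tangent vectors of a C-totally real submanifold.
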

\begin{proof}
Let $U\in T_pM$ be a unit tangent vector. We choose an orthonormal basis
$e_1,\ldots, e_n,$ $e_{ n + 1} , \ldots , e_{2m+1}$ such that $e_1,\ldots,e_n$ are tangential to $M$ at $p$ with $e_1 = U$. Then, the squared second fundamental form and the squared mean curvature satisfy the
following relation
\begin{align}\begin{split}
\Vert \sigma \Vert^2=&\frac{1}{2}n^2\Vert H\Vert^2+\frac{1}{2}\sum_{r=n+1}^{2m+1}(\sigma^r_{11}-\sigma_{22}^r\cdots-\sigma^r_{nn})^2\label{eq:34}\\
&+2\sum_{r=n+1}^{2m+1}\sum_{j=2}^{n}(\sigma^r_{1j})^2-2\sum_{r=n+1}^{2m+1}\sum_{2\leq i< j\leq n}(\sigma_{ii}^r\sigma_{jj}^r-(\sigma_{ij}^r)^2).
\end{split}
\end{align}
From (\ref{eq:17}) and (\ref{eq:34}), we get
\begin{align}
\tau&-\dfrac{n(n-1)}{2}f_{1}-(n-1)f_{4}\tr(h^{T})\nonumber\\
&-\dfrac{1}{2}f_{5,1}\left\lbrace(\tr(h^{T}))^{2}-\Vert h^{T}\Vert^{2}\right\rbrace +\dfrac{1}{2}f_{5,2}\left\lbrace \Vert(\phi h)^{T}\Vert^{2}-(\tr(\phi h)^{T})^{2}\right\rbrace \nonumber\\
&+\frac{1}{4}\sum_{r=n+1}^{2m+1}(\sigma^r_{11}-\sigma_{22}^r\cdots-\sigma^r_{nn})^2\label{eq:35}\\
&+\sum_{r=n+1}^{2m+1}\sum_{j=2}^{n}(\sigma^r_{1j})^2-\sum_{r=n+1}^{2m+1}\sum_{2\leq i<j \leq n}(\sigma_{ii}^r\sigma_{jj}^r-(\sigma_{ij}^r)^2)\nonumber\\
&=\dfrac{1}{4}n^{2}\Vert H\Vert^{2}.\nonumber
\end{align}
From \eqref{030} and (\ref{028}), we also have
\begin{align}\label{eq:36}\begin{split}
\sum_{2\leq i< j\leq n}K_{ij}=&\sum_{r=n+1}^{2m+1}\sum_{2\leq i< j\leq n}(\sigma_{ii}^r\sigma_{jj}^r-(\sigma_{ij}^r)^2)+\dfrac{(n-1)(n-2)}{2}f_{1}\\
&+\dfrac{1}{2}f_{5,1}\bigg\lbrace(\tr(h^{T}))^{2}-2\tr(h^T)\langle h^T e_1,e_1\rangle\\
&\hspace{1cm}-\|h^{T}\Vert^{2}+2\Vert h^T e_1\Vert^2\bigg\rbrace \\
&-\dfrac{1}{2}f_{5,2}\bigg\lbrace \Vert(\phi h)^{T}\Vert^{2}-(\tr(\phi h)^{T})^{2}-2\Vert(\phi h)^T e_1\Vert^2\\
&\qquad\qquad+2\tr((\phi h)^T)\langle(\phi h)^T e_1,e_1\rangle\bigg\rbrace\\
&+f_4(n-2)(\tr(h^T)-\langle h^Te_1,e_1\rangle).\end{split}
\end{align}
From (\ref{eq:35}) and (\ref{eq:36}), we get
\begin{align}
\Ric(U)=&\dfrac{1}{4}n^2\Vert H\Vert^2+(n-1)f_1+f_4\tr(h^T)+(n-2)f_4\langle h^TU,U\rangle\nonumber\\
&+f_{5,1}\big(\tr(h^T)\langle h^TU,U\rangle-\Vert h^TU\Vert^2\big)\nonumber\\
&+f_{5,2}\big(\tr((\phi h)^T)\langle(\phi h)^TU,U\rangle-\Vert (\phi h)^TU\Vert^2\big)\nonumber\\
&-\frac{1}{4}\sum_{r=n+1}^{2m+1}(\sigma^r_{11}-\sigma_{22}^r\cdots-\sigma^r_{nn})^2\label{eq:37}\\
&-\sum_{r=n+1}^{2m+1}\sum_{j=2}^{n}(\sigma^r_{1j})^2,\nonumber
\end{align}
which implies (\ref{eq:33}).

Assuming $U=e_1$, from (\ref{eq:37}), the equality case of (\ref{eq:33}) is valid if and only if
\begin{align}
\sigma_{11}^r&=\sigma_{22}^r+\cdots+\sigma_{nn}^r,\label{eq:38}\\
\sigma_{12}^r&=\cdots=\sigma_{1n}^r=0, r=n+1, \ldots,2m+1.\nonumber
\end{align}
If $H(p) = 0,$ (\ref{eq:38}) implies that $U = e_1$ lies in the relative null space $\mathcal{N}_p$. Conversely, if
$U=e_1$ lies in the relative null space $\mathcal{N}_p$, then (\ref{eq:38}) holds, since $H(p) = 0$ is assumed.
Thus (2) is proved.

Now we prove (3). The equality case of \eqref{eq:33} for all unit tangent vectors to $M$ at $p$
happens if and only if
\begin{align}
2\sigma_{ii}^r&=\sigma_{11}^r+\cdots+\sigma_{nn}^r, i=1,\ldots, n, r=n+1,\ldots,2m+1,\label{eq:39}\\
\sigma_{ij}^r&=0, i\neq j,  r=n+1,\ldots, 2m+1.\nonumber
\end{align}
Thus, we have two cases, namely either $n = 2$ or $n\neq2$. In the first case $p$ is a totally
umbilical point, while in the second case $p$ is a totally geodesic point. The proof of
converse part is straightforward.
\end{proof}
\section{Squared mean curvature and $k$-Ricci curvature}
\begin{theorem}
Let $M$ be an $n$-dimensional $(n\geq3)$ $C$-totally real submanifold  in a $(2m+1)$-dimensional generalized $(\kappa,\mu)$-contact space form with divided $R_5$, $\~M(f_1,\ldots,f_6)$. Then we have
 \begin{align}\label{eq:40}\begin{split}
n(n-1)\Vert H\Vert^{2}\geq&2\tau -n(n-1)f_{1}-2(n-1)f_{4}\tr(h^{T})\\
&-f_{5,1}\left\lbrace(\tr(h^{T}))^{2}-\Vert h^{T}\Vert^{2}\right\rbrace \\
&+f_{5,2}\left\lbrace \Vert(\phi h)^{T}\Vert^{2}-(\tr(\phi h)^{T})^{2}\right.\rbrace\end{split}
\end{align}

\end{theorem}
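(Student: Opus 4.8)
The plan is to recognize that \eqref{eq:40} is, after a single substitution, nothing more than the classical pointwise estimate $\Vert\sigma\Vert^{2}\geq n\Vert H\Vert^{2}$ re-expressed through the intrinsic curvature functions of the ambient space form. Concretely, I would use the scalar-curvature identity \eqref{eq:17} to isolate the combination of $f_1,f_4,f_{5,1},f_{5,2}$-terms that appears on the right-hand side of \eqref{eq:40}, and then bound the surviving quantity $n^{2}\Vert H\Vert^{2}-\Vert\sigma\Vert^{2}$ from above.

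First I would solve \eqref{eq:17} for exactly the expression standing on the right-hand side of \eqref{eq:40}. Carrying the $f_1$, $f_4$, $f_{5,1}$ and $f_{5,2}$ contributions of \eqref{eq:17} across, each of them cancels against the matching term in \eqref{eq:40}; in particular, the sign discrepancy in the $f_{5,2}$-block between \eqref{eq:17} (which carries a minus sign) and \eqref{eq:40} (which carries a plus sign) is precisely what forces those two $f_{5,2}$-contributions to annihilate. What survives is the identity
\begin{align*}
&2\tau-n(n-1)f_{1}-2(n-1)f_{4}\tr(h^{T})\\
&\quad-f_{5,1}\{(\tr(h^{T}))^{2}-\Vert h^{T}\Vert^{2}\}+f_{5,2}\{\Vert(\phi h)^{T}\Vert^{2}-(\tr(\phi h)^{T})^{2}\}=n^{2}\Vert H\Vert^{2}-\Vert\sigma\Vert^{2}.
\end{align*}
Thus \eqref{eq:40} is equivalent to the single clean assertion $n(n-1)\Vert H\Vert^{2}\geq n^{2}\Vert H\Vert^{2}-\Vert\sigma\Vert^{2}$, that is, to $\Vert\sigma\Vert^{2}\geq n\Vert H\Vert^{2}$.

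Second I would prove this last inequality, which is the only genuine estimate in the whole argument. Fixing the orthonormal frames $\{e_1,\ldots,e_n\}$ of $T_pM$ and $\{e_{n+1},\ldots,e_{2m+1}\}$ of $T^{\bot}_pM$, write $nH^{r}=\sum_{i}\sigma_{ii}^{r}$ for each normal index $r$. Then $\Vert\sigma\Vert^{2}=\sum_{r}\sum_{i,j}(\sigma_{ij}^{r})^{2}\geq\sum_{r}\sum_{i}(\sigma_{ii}^{r})^{2}$, and for each fixed $r$ the Cauchy--Schwarz inequality gives $\sum_{i}(\sigma_{ii}^{r})^{2}\geq\tfrac{1}{n}\big(\sum_{i}\sigma_{ii}^{r}\big)^{2}=n(H^{r})^{2}$. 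Summing over $r$ yields $\Vert\sigma\Vert^{2}\geq n\Vert H\Vert^{2}$.

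Finally, combining the two steps gives $n^{2}\Vert H\Vert^{2}-\Vert\sigma\Vert^{2}\leq n^{2}\Vert H\Vert^{2}-n\Vert H\Vert^{2}=n(n-1)\Vert H\Vert^{2}$, and since the left-hand side of this chain equals the right-hand side of \eqref{eq:40}, the desired inequality follows at once. I do not expect a substantial obstacle here: essentially the entire content reduces to the Cauchy--Schwarz step, and the only point demanding care is the exact sign-matching of the $f_{5,2}$-terms, without which the reduction to $\Vert\sigma\Vert^{2}\geq n\Vert H\Vert^{2}$ would fail to be exact.
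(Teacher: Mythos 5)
Your proof is correct and follows essentially the same route as the paper: both reduce \eqref{eq:40}, via the identity \eqref{eq:17}, to the pointwise estimate $\Vert\sigma\Vert^{2}\geq n\Vert H\Vert^{2}$ and establish that by Cauchy--Schwarz on the diagonal entries of the shape operators. The only (harmless) difference is that the paper first passes to a frame with $e_{n+1}$ parallel to $H$ and $A_{n+1}$ diagonalized before applying the same inequality, whereas you apply Cauchy--Schwarz directly in each normal direction.
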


\begin{proof}
Let $X \in T_pM$ be a unit tangent vector $X$ at $p$. We choose an orthonormal
basis $\{e_1,\ldots, e_n,e_{n+1},\ldots, e_{2m+1}\}$ such that $e_1,\ldots, e_n$ are tangential to $M$ at $p$ with
$e_1 = X$. We recall the equation (\ref{eq:17}) as
\begin{align}
n^{2}\Vert H\Vert^{2}&=2\tau +\Vert\sigma\Vert^{2} -n(n-1)f_{1}-2(n-1)f_{4}\tr(h^{T})\nonumber\\
&-f_{5,1}\left\lbrace(\tr(h^{T}))^{2}-\Vert h^{T}\Vert^{2}\right\rbrace +f_{5,2}\left\lbrace \Vert(\phi h)^{T}\Vert^{2}-(\tr(\phi h)^{T})^{2}\right\rbrace. \label{eq:41}
\end{align}
Let the orthonormal basis $\{e_1,\ldots, e_n, e_{n+1},\ldots, e_{2m+1}\}$ be
such that $e_{n+1}$ is parallel to the mean curvature vector $H(p)$ and $e_1,\ldots, e_n$ diagonalize
the shape operator $A_{n+1}$. Then the shape operators take the forms
\begin{align}
A_{n+1}=&\begin{bmatrix}
a_1&0&\cdots&0\\
0&a_2&\cdots&0\\
\vdots&\vdots&\ddots&\vdots\\
0&0&\cdots&a_n
\end{bmatrix},\label{eq:42}\\
A_r=(\sigma_{ij}^r), i,j=1,\ldots,n , r=&n+2, \ldots, 2m+1, \tr(A_r)=\sum_{i=1}^{n}\sigma_{ii}^r=0.\label{eq:43}
\end{align}
From (\ref{eq:41}), we get
\begin{align}
n^{2}\Vert H\Vert^{2}&=2\tau +\sum_{i=1}^{n}a_i^2 +\sum_{r=n+2}^{2m+1}\sum_{i,j=1}^{n}(\sigma_{ij}^r)^2-n(n-1)f_{1}-2(n-1)f_{4}\tr(h^{T})\nonumber\\
&-f_{5,1}\left\lbrace(\tr(h^{T}))^{2}-\Vert h^{T}\Vert^{2}\right\rbrace +f_{5,2}\left\lbrace \Vert(\phi h)^{T}\Vert^{2}-(\tr(\phi h)^{T})^{2}\right\rbrace.\label{eq:44}
\end{align}
Since
\begin{align}
0\leq\sum_{i<j}(a_i-a_j)^2=(n-1)\sum_ia_i^2-2\sum_{i<j}a_ia_j.
\end{align}
Therefor, we get
\begin{align}
n^2\Vert H\Vert^2=\left(\sum_{i=1}^{n}a_i\right)^2=\sum_{i=1}^{n}a_i^2+2\sum_{i<j}a_ia_j\leq n\sum_{i=1}^{n}a_i^2,\label{eq:45}
\end{align}
which implies
$$\sum_{i=1}^{n}a_i^2\geq n\Vert H\Vert^2.$$
In view of (\ref{eq:44}), we obtain
\begin{align}
n^{2}\Vert H\Vert^{2}&\geq2\tau +n \Vert H\Vert^2-n(n-1)f_{1}-2(n-1)f_{4}\tr(h^{T})\nonumber\\
&-f_{5,1}\left\lbrace(\tr(h^{T}))^{2}-\Vert h^{T}\Vert^{2}\right\rbrace +f_{5,2}\left\lbrace \Vert(\phi h)^{T}\Vert^{2}-(\tr(\phi h)^{T})^{2}\right\rbrace,\label{eq:46}
\end{align}
which gives (\ref{eq:40}).
\end{proof}
\begin{theorem}
Let $M$ be an $n$-dimensional $(n\geq3)$ $C$-totally real submanifold  in a $(2m+1)$-dimensional generalized $(\kappa,\mu)$-contact space form with divided $R_5$, $\~M(f_1, \ldots ,f_6)$. Then we have
 \begin{align}\label{eq:47}
n(n-1)\Vert H\Vert^{2}&\geq n(n-1)\theta_k(p) -n(n-1)f_{1}-2(n-1)f_{4}\tr(h^{T})\nonumber\\
&-f_{5,1}\left\lbrace(\tr(h^{T}))^{2}-\Vert h^{T}\Vert^{2}\right\rbrace +f_{5,2}\left\lbrace \Vert(\phi h)^{T}\Vert^{2}-(\tr(\phi h)^{T})^{2}\right.\rbrace
\end{align}
\end{theorem}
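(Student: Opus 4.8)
The plan is to deduce \eqref{eq:47} from the previously established inequality \eqref{eq:40} by estimating the scalar curvature from below in terms of the $k$-Ricci invariant $\theta_k$. Observe that the right-hand sides of \eqref{eq:40} and \eqref{eq:47} agree term-by-term in their $f_1,f_4,f_{5,1},f_{5,2}$ contributions, differing only in the leading term: \eqref{eq:40} carries $2\tau$, whereas \eqref{eq:47} carries $n(n-1)\theta_k(p)$. Hence the entire statement reduces to the purely intrinsic inequality
\[
2\tau(p)\geq n(n-1)\theta_k(p),
\]
after which substituting this lower bound for $2\tau$ into \eqref{eq:40} yields \eqref{eq:47} immediately.

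To prove the intrinsic inequality I would fix an orthonormal basis $\{e_1,\ldots,e_n\}$ of $T_pM$ and, for each $k$-element subset $\{i_1,\ldots,i_k\}\subset\{1,\ldots,n\}$, let $L=\mathbf{P}_{i_1\cdots i_k}$ be the $k$-plane section spanned by $e_{i_1},\ldots,e_{i_k}$. Summing the Ricci curvature $\Ric_L(e_{i_\alpha})$ over the $k$ basis vectors of $L$ and using \eqref{eq:10} together with \eqref{eq:11} gives the identity
\[
\sum_{\alpha=1}^{k}\Ric_L(e_{i_\alpha})=2\tau(L).
\]
On the other hand, the definition \eqref{eq:12} of $\theta_k$ forces $\Ric_L(e_{i_\alpha})\geq(k-1)\theta_k(p)$ for each $\alpha$, so that $2\tau(L)\geq k(k-1)\theta_k(p)$ for every such section $L$.

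The decisive step is the combinatorial averaging over all $\binom{n}{k}$ coordinate $k$-planes. Summing $2\tau(L)\geq k(k-1)\theta_k(p)$ over all of them, and noting that each sectional curvature $K(e_i\wedge e_j)$ appears in exactly $\binom{n-2}{k-2}$ of these sections (those obtained by adjoining $k-2$ of the remaining $n-2$ basis vectors), the left-hand side collapses to $2\binom{n-2}{k-2}\tau(p)$ while the right-hand side equals $\binom{n}{k}k(k-1)\theta_k(p)$. Simplifying the binomial ratio $\binom{n}{k}/\binom{n-2}{k-2}=n(n-1)/\big(k(k-1)\big)$ then produces exactly $2\tau(p)\geq n(n-1)\theta_k(p)$. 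I expect this bookkeeping — correctly counting the multiplicity $\binom{n-2}{k-2}$ with which each pair contributes and reducing the binomial coefficients — to be the only delicate point; everything else is a direct substitution into \eqref{eq:40}.
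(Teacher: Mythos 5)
Your proposal is correct and follows essentially the same route as the paper: reduce \eqref{eq:47} to the intrinsic bound $2\tau(p)\geq n(n-1)\theta_k(p)$, derive it from the identity $2\tau(L)=\sum_\alpha\Ric_L(e_{i_\alpha})$ for coordinate $k$-planes together with the averaging formula $\sum_L\tau(L)=\binom{n-2}{k-2}\tau(p)$, and substitute into \eqref{eq:40}. Your multiplicity count $\binom{n-2}{k-2}$ and the binomial reduction $\binom{n}{k}/\binom{n-2}{k-2}=n(n-1)/(k(k-1))$ are exactly the bookkeeping behind the paper's equations \eqref{eq:48} and \eqref{eq:49}.
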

\begin{proof}
Let $\{e_1,\ldots,e_n\}$ be an orthonormal basis of $T_pM.$ We denote by $L_{i_1\ldots i_k}$ the
$k$-plane section spanned by $e_{i_1},\ldots, e_{i_k}$. From (\ref{eq:11}) and (\ref{eq:10}), it follows that
\begin{align}
\tau(L_{i_1\ldots i_k})=\dfrac{1}{2}\sum_{i\in\{{i_1},\ldots, {i_k}\}}\Ric_{L_{i_1\ldots i_k}}(e_i),\label{eq:48}
\end{align}
and
\begin{align}\label{eq:49}
\tau(p)=\dfrac{1}{C^{n-2}_{k-2}}\sum_{1\leq i_1<\cdots<i_k\leq n}\tau(L_{i_1\ldots i_k}).
\end{align}
Combining (\ref{eq:12}), (\ref{eq:48}) and (\ref{eq:49}), we obtain
\begin{align}
\tau(p)\geq\dfrac{n(n-1)}{2}\theta_k(p),
\end{align}
which in view of (\ref{eq:40}) implies (\ref{eq:47}).
\end{proof}

\section{$\delta$-invariant and inequalities for submanifolds in a Sasakian  generalized $(\kappa,\mu)$-space forms}
In this section,we apply these results to get corresponding results for $C$-totally real submanifolds in a generalized $(\kappa,\mu)$-contact space forms $\~M(f_1, \ldots ,f_6)$ with $f_3=f_1-1$. If $\kappa = 1=f_1-f_3$, the  generalized $(\kappa,\mu)$-contact space forms $\~M(f_1, \ldots ,f_6)$
reduces to Sasakian space form; thus $h = 0$ and (\ref{028}) becomes
\begin{align}\label{eq:51}
\~R(X,Y)Z=f_1R_1(X,Y)Z+f_2R_2(X,Y)Z+f_3R_3(X,Y)Z.
\end{align}
Moreover, for a $C$-totally real submanifolds in Sasakian space forms, from (\ref{4.1}), we also
get
\begin{align}
A_\xi=0.\label{eq:52}
\end{align}
Thus, in view of Theorem \ref{th:52}, we can state the following.

\begin{theorem}\label{th:81}
Let $M$ be an $n$-dimensional $(n\geq3)$ $C$-totally real submanifold  in a $(2m+1)$-dimensional generalized $(\kappa,\mu)$-contact space forms $\~M(f_1, \ldots ,f_6)$ satisfying $f_3=f_1-1$.
We have
\begin{align}
\delta_M&\leqslant\dfrac{n^{2}(n-2)}{2(n-1)}\Vert H\Vert^{2}+\dfrac{(n+1)(n-2)}{2}f_{1}.\label{eq:53}
\end{align}
The equality in (\ref{eq:53}) holds at $p\in M$ if and only if there exist an orthonormal
basis $\lbrace e_1,\ldots ,e_n\rbrace$  of $T_p M$ and an orthonormal basis $\lbrace e_{n+1},\ldots,e_{2m+1}=\xi\rbrace$ of $T^{\perp}_p M$
such that ($\mathtt{a}$) $\pi= \Span \lbrace e_1, e_2\rbrace$ and ($\mathtt{b}$) the forms of shape operators $A_r\equiv A_{e_{r}}$ , $r=n+1,\ldots,2m+1$, become
\begin{align}\label{eq:54}
A_{n+1}=
\begin{bmatrix}
~~~a&~~~~~~0&0~~~\\
~~~0&~~~~~~b&0~~~\\
~~~0&~~~~~~0&(a+b)I_{n-2}
\end{bmatrix},
\end{align}
\begin{align}\label{eq:55}
A_{r} =
\begin{bmatrix}
~~~c_{r}&~~~~~d_{r}&~~~0~~~~~\\
~~~d_{r}&~~~~~-c_{r}&~~~0~~~~~\\
~~~0&~~~~~0&~~~0_{n-2}
\end{bmatrix},~~~~r=n+2,\ldots,2m,
\end{align}
and $A_\xi=0.$
\end{theorem}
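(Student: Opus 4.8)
The plan is to deduce Theorem~\ref{th:81} as a direct specialization of Theorem~\ref{th:52}. The decisive observation is that the hypothesis $f_3=f_1-1$, equivalently $\kappa=f_1-f_3=1$, forces the ambient manifold to be a Sasakian space form, so that the structure tensor $h$ vanishes identically; this is precisely the reduction recorded in \eqref{eq:51} and \eqref{eq:52}. Accordingly I would begin by writing down inequality \eqref{eq:21} of Theorem~\ref{th:52} at an arbitrary point $p\in M$ and an arbitrary plane section $\pi\subset T_pM$, and then substitute $h=0$.

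Once $h=0$, every tensor built from $h$ disappears: $h^{T}=0$ and $(\phi h)^{T}=0$, so that $\tr(h^{T})$, $\Vert h^{T}\Vert^{2}$, $\tr(h|_{\pi})$, $\det(h|_{\pi})$, $\Vert(\phi h)^{T}\Vert^{2}$, $\tr((\phi h)^{T})$ and $\det((\phi h)|_{\pi})$ all vanish. Consequently the three bracketed blocks carrying the coefficients $f_{4}$, $f_{5,1}$ and $f_{5,2}$ in \eqref{eq:21} drop out, and the inequality collapses to
\[
\tau-K(\pi)\leq \frac{n^{2}(n-2)}{2(n-1)}\Vert H\Vert^{2}+\frac{(n+1)(n-2)}{2}f_{1}
\]
for every plane section $\pi$ at $p$. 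To pass from this to the Chen invariant $\delta_{M}=\tau-(\inf K)$, I would use that the right-hand side is independent of $\pi$: taking the infimum of $K(\pi)$ over all plane sections (equivalently, choosing $\pi$ to realize $(\inf K)(p)$) yields $\delta_{M}\leq \frac{n^{2}(n-2)}{2(n-1)}\Vert H\Vert^{2}+\frac{(n+1)(n-2)}{2}f_{1}$, which is \eqref{eq:53}.

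For the equality discussion I would invoke the equality characterization already established in Theorem~\ref{th:52}: equality forces the shape operators $A_{n+1}$ and $A_{r}$ ($r=n+2,\ldots,2m+1$) into the normal forms \eqref{eq:22} and \eqref{eq:23}. The only new ingredient in the Sasakian setting is that $A_{\xi}=0$ by \eqref{eq:52}; since $e_{2m+1}=\xi$, this removes the $r=2m+1$ slot from \eqref{eq:23}, leaving exactly the forms \eqref{eq:54} and \eqref{eq:55} together with the separate condition $A_{\xi}=0$. The converse follows by running these shape-operator forms back through the Gauss equation \eqref{030}, just as in Theorem~\ref{th:52}.

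The argument is essentially bookkeeping, so there is no genuinely hard step. The only points requiring a little care are the transition from the pointwise-in-$\pi$ inequality to the statement about $\delta_{M}$, and the correct placement of the $\xi=e_{2m+1}$ direction in the equality forms, so that $A_{\xi}=0$ is displayed separately in \eqref{eq:55} rather than silently absorbed into the general shape operator.
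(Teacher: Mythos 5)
Your proposal is correct and follows exactly the route the paper intends: the paper itself derives Theorem~\ref{th:81} by noting that $f_3=f_1-1$ forces the Sasakian reduction $h=0$ and $A_\xi=0$ (equations \eqref{eq:51}--\eqref{eq:52}) and then invoking Theorem~\ref{th:52}. Your filling-in of the details — the vanishing of the $f_4$, $f_{5,1}$, $f_{5,2}$ blocks, the passage to $\inf K$, and the separate display of $A_\xi=0$ in the equality case — matches the paper's (essentially unwritten) argument.
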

\begin{theorem}\label{th:82}
Let $M$ be an $n$-dimensional $(n\geq3)$ $C$-totally real submanifold  in a $(2m+1)$-dimensional generalized $(\kappa,\mu)$-contact space forms $\~M(f_1, \ldots ,f_6)$ satisfying $f_3=f_1-1$. Then for each point $p\in M$
\begin{enumerate}
  \item For all unit vector $U\in T_pM,$ we have
  \begin{align}
\Ric(U)\leq&\dfrac{1}{4}n^2\Vert H\Vert^2+(n-1)f_1.\label{eq:56}
  \end{align}
  \item For $H(p)=0,$ a unit tangent vector $U\in T_pM$ satisfies
  \begin{align}\label{eq:57}
  \Ric(U)=(n-1)f_1
  \end{align}
   if and only if $U$ belongs to the relative null space $\mathcal{N}_p.$
  \item For each $p \in M$
  \begin{align}
  4S\leq&(n^2\Vert H\Vert^2+4(n-1)f_1)g.\label{eq:58}
  \end{align}
  \end{enumerate}
  where $S$ is the Ricci tensor of the submanifold and $g$ is the Riemannian metric. The equality in (\ref{eq:58}) holds
if and only if either $p$ is a totally geodesic point or $n= 2$ and $p$ is a totally
umbilical point.
\end{theorem}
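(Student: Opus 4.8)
The plan is to derive Theorem \ref{th:82} as the Sasakian specialization of Theorem \ref{th:61}. The decisive observation is that the hypothesis $f_3 = f_1 - 1$ (equivalently $\kappa = 1$) forces the ambient structure to be Sasakian: by Theorem \ref{21} a generalized $(\kappa,\mu)$-contact metric manifold with $f_3 = f_1 - 1$ is a Sasakian manifold, and on any Sasakian manifold the tensor field $h$ defined by $2h = \mathcal{L}_\xi \phi$ vanishes identically. Hence the tangential parts $h^T$ and $(\phi h)^T$ are both zero, and $A_\xi = (\phi h)^T = 0$ by \eqref{4.1}. I would then simply substitute $h^T = 0$ and $(\phi h)^T = 0$ into the inequality \eqref{eq:33} of Theorem \ref{th:61}: every term carrying a factor of $f_4$, $f_{5,1}$, or $f_{5,2}$ disappears, since $\tr(h^T) = 0$, $g(h^T U, U) = 0$, $\Vert h^T U\Vert^2 = 0$, and likewise for $(\phi h)^T$. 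What survives is precisely $\tfrac14 n^2 \Vert H\Vert^2 + (n-1)f_1$, which is the bound \eqref{eq:56} of part (i).

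For part (ii), I would invoke part (ii) of Theorem \ref{th:61} verbatim under the substitution $h = 0$. With $H(p)=0$ and $h=0$, inequality \eqref{eq:33} collapses to $\Ric(U) \leq (n-1)f_1$, so the equation \eqref{eq:57} is exactly the equality case of \eqref{eq:56}; that equality case was already characterized in Theorem \ref{th:61} as $U \in \mathcal{N}_p$, giving the stated equivalence at once. For part (iii), I would reinterpret \eqref{eq:56} as a statement about the Ricci tensor $S$. Writing $\Ric(U) = S(U,U)$ for a unit vector $U$, \eqref{eq:56} reads $S(U,U) \leq \tfrac14 n^2\Vert H\Vert^2 + (n-1)f_1$ for every unit $U$; by homogeneity (rescaling $U$) this is equivalent to the symmetric bilinear-form inequality $4S \leq (n^2\Vert H\Vert^2 + 4(n-1)f_1)\,g$, which is \eqref{eq:58}. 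The equality case for all unit vectors is governed by part (iii) of Theorem \ref{th:61}, whose trichotomy, once $h=0$ is imposed, reduces to the claimed dichotomy: either $p$ is a totally geodesic point, or $n=2$ and $p$ is a totally umbilical point.

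The computations involved are entirely routine; the only step demanding care is the logical chain $f_3 = f_1 - 1 \Rightarrow$ Sasakian $\Rightarrow h = 0$, which is what licenses discarding every $h$-dependent term in \eqref{eq:33}. Once $h = 0$ is secured there is no genuine obstacle, as all three parts follow as immediate specializations of the corresponding parts of Theorem \ref{th:61}, with the tensor reformulation in part (iii) being the only place where a brief homogeneity argument is needed.
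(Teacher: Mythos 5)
Your proposal is correct and follows exactly the route the paper takes: Section 6 observes that $f_3=f_1-1$ forces the ambient contact metric structure to be Sasakian, hence $h=0$ and $A_\xi=0$, and Theorem \ref{th:82} is then stated as the immediate specialization of Theorem \ref{th:61} with every $h^T$- and $(\phi h)^T$-term discarded. Your handling of parts (ii) and (iii), including the reformulation of the pointwise Ricci bound as the tensor inequality \eqref{eq:58} and the inheritance of the equality cases, matches the paper's (implicit) argument.
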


\begin{theorem}
Let $M$ be an $n$-dimensional $(n\geq3)$ $C$-totally real submanifold  in a $(2m+1)$-dimensional generalized $(\kappa,\mu)$-contact space form with divided $R_5$, $\~M(f_1, \ldots ,f_6)$ satisfying $f_3=f_1-1$. Then we have
\begin{align}\label{eq:311}
\delta \left(
n_{1},\ldots ,n_{k}\right)\leq c\nn\Vert H\Vert^2+b\nn f_1
\end{align}
for any $k$-tuple $\nn\in\mathcal{S}(n).$ The equality case of inequality \eqref{eq:311} holds at a point $p\in M$ if and only if there exists an orthonormal basis $\{e_1,\ldots,e_{2m+1}\}$ at $p$ such that the shape operators of $M$ in $\~M$ at $p$ take the following form:
\begin{align}\label{eq:312}
A_r=\begin{pmatrix}
A^r_1&\cdots&0&\\
\vdots&\ddots&\vdots&0\\
0&\cdots&A^r_k&\\
&0&&a_rI
\end{pmatrix}, r=n+1,\ldots,2m+1,
\end{align}
where $I$ is an identity matrix and $A^r_i$ are symmetric $n_i\times n_i$ submatrices such that
\begin{align}\label{eq:313}
\tr(A^r_1)=\ldots=\tr(A^r_k)=a_r.
\end{align}
\end{theorem}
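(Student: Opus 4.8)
The plan is first to collapse the ambient geometry to that of a real space form. Since $\~M$ is a contact metric manifold with $f_3=f_1-1$, Theorem \ref{21} forces $\~M$ to be Sasakian, so $h=0$; hence $h^{T}$, $(\phi h)^{T}$ and, by \eqref{4.1}, $A_\xi$ all vanish, the curvature tensor reduces to \eqref{eq:51}, and \eqref{eq:52} gives $A_\xi=0$. Because $M$ is C-totally real, every tangent vector is orthogonal to $\xi$ and $\phi$ carries $TM$ into $T^{\bot}M$, so for tangent $X,Y,Z,W$ the tensors $R_2$ and $R_3$ contribute nothing to $\~R(X,Y,Z,W)$ and every tangent plane has ambient sectional curvature $f_1$. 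Thus $M$ behaves exactly as an $n$-dimensional submanifold of a real space form of constant curvature $f_1$, and the statement becomes Chen's classical $\delta\n$ estimate with $c$ replaced by $f_1$.

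Next I would fix an arbitrary family of mutually orthogonal subspaces $L_1,\ldots,L_k\subset T_pM$ with $\dim L_j=n_j$ and an orthonormal basis $\{e_1,\ldots,e_n\}$ adapted to the splitting, so that each $L_j$ is a consecutive block and the remaining $n_0=n-\sum_j n_j$ vectors span the complementary directions. Applying the Gauss equation \eqref{030} to $\tau$ and to each $\tau(L_j)$ and subtracting, the coefficient of $f_1$ is $\binom n2-\sum_j\binom{n_j}2=b\n$, and one obtains
\begin{align*}
\tau-\sum_{j=1}^{k}\tau(L_j)=b\n f_1+\sum_{r=n+1}^{2m}\Big(\sum_{i,j\ \text{different blocks}}\sigma_{ii}^{r}\sigma_{jj}^{r}-\sum_{i,j\ \text{different blocks}}(\sigma_{ij}^{r})^{2}\Big).
\end{align*}
The off-block squared terms $-\sum(\sigma_{ij}^{r})^{2}$ are $\leq0$ and may be discarded, so it remains to bound $\sum_r\sum_{\text{different blocks}}\sigma_{ii}^{r}\sigma_{jj}^{r}$ from above.

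The analytic core is an algebraic optimization generalizing Lemma \ref{lem:cheninequality} to several blocks: for fixed $r$, writing $S_j^{r}=\sum_{i\in L_j}\sigma_{ii}^{r}$, $S_0^{r}=\sum_{i\notin\cup L_j}\sigma_{ii}^{r}$ and $t_r=\sum_i\sigma_{ii}^{r}$, one first rewrites the off-block sum as $\tfrac12 t_r^{2}-\tfrac12\sum_{j=1}^{k}(S_j^{r})^{2}-\tfrac12\sum_{i\notin\cup L_j}(\sigma_{ii}^{r})^{2}$, and then a Cauchy--Schwarz (or Lagrange-multiplier) estimate $\sum_{j=1}^{k}(S_j^{r})^{2}+(S_0^{r})^{2}/n_0\geq t_r^{2}/(n_0+k)$ yields
\begin{align*}
\sum_{i,j\ \text{different blocks}}\sigma_{ii}^{r}\sigma_{jj}^{r}\leq\frac{t_r^{2}}{2}\cdot\frac{n+k-1-\sum_j n_j}{n+k-\sum_j n_j}.
\end{align*}
Summing over $r$ and using $n^{2}\Vert H\Vert^{2}=\sum_{r}t_r^{2}$ produces exactly the factor $c\n$. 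Combining this with the previous display gives $\tau-\sum_j\tau(L_j)\leq c\n\Vert H\Vert^{2}+b\n f_1$ for every admissible family; taking the supremum over families (equivalently the infimum of $\sum_j\tau(L_j)$) gives \eqref{eq:311}. I expect this optimization to be the main obstacle, since it is what pins down the sharp constant $c\n$.

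Finally, for equality I would trace back the two inequalities. Equality forces every cross-block component $\sigma_{ij}^{r}$ (with $i,j$ not in a common $L_\ell$) to vanish, and the optimization is attained only when $S_0^{r}/n_0=S_1^{r}=\cdots=S_k^{r}$ for each $r$, with all diagonal entries in the complementary block equal to the common value $a_r$. Translating into shape operators, the vanishing cross-block entries give the block-diagonal form \eqref{eq:312} with an $a_r I$ block on the leftover directions, and the equal trace condition $S_j^{r}=a_r$ is precisely \eqref{eq:313}; since $A_\xi=0$, only $r=n+1,\ldots,2m$ contribute, matching the stated range. The converse is a direct substitution back into the Gauss equation.
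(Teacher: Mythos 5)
Your proof is correct, and it reaches \eqref{eq:311} by a genuinely different route in its key algebraic step. The paper follows Chen's original scheme from \cite{Chen:Somenewobstructionstominimal}: it introduces $\epsilon=2\tau-2c\n\Vert H\Vert^2-n(n-1)f_1$, rewrites \eqref{eq:17} (with $h=0$) as $n^2\Vert H\Vert^2=\gamma(\epsilon+\Vert\sigma\Vert^2)$ with $\gamma=n+k-\sum_j n_j$, regroups the diagonal entries of $A_{n+1}$ into $\gamma+1$ numbers $a_1,\ldots,a_{\gamma+1}$ (one lumped sum per block $L_j$, plus singletons), and applies Lemma \ref{lem:cheninequality} to get a lower bound for $\sum_j\tau(L_j)$ as in \eqref{eq:318}--\eqref{eq:320}. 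You instead compute $\tau-\sum_j\tau(L_j)$ directly from the Gauss equation, discard the cross-block terms $-(\sigma_{ij}^{r})^{2}\leq 0$, and bound the cross-block products of diagonal entries by the weighted Cauchy--Schwarz estimate $\sum_{j}(S_j^{r})^{2}+(S_0^{r})^{2}/n_0\geq t_r^{2}/\gamma$, which makes the provenance of the constant $c\n$ completely transparent and avoids the regrouping; the two arguments are equivalent at heart, since Lemma \ref{lem:cheninequality} is itself a disguised form of the same quadratic estimate, but yours trades the paper's explicit display of the leftover nonnegative terms for a cleaner identification of the sharp constant. Your equality analysis (vanishing cross-block entries, equal leftover diagonal entries $a_r$, and $S_1^{r}=\cdots=S_k^{r}=S_0^{r}/n_0=a_r$) recovers \eqref{eq:312}--\eqref{eq:313} exactly as in the paper. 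One small point to tidy up: in the degenerate case $n_0=n-\sum_j n_j=0$ the term $(S_0^{r})^{2}/n_0$ must simply be omitted, the estimate $\sum_j(S_j^{r})^{2}\geq t_r^{2}/k$ still giving $\gamma=k$.
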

\begin{proof}
Let $\n\in\mathcal{S}(n).$ We set
\begin{align}\label{eq:epsilon}\begin{split}
\epsilon=2\tau-2c\n\Vert H\Vert^2-n(n-1)f_1.
\end{split}
\end{align}
Substituting \eqref{eq:17} in \eqref{eq:epsilon}, we have
\begin{align}\label{eq:315}
n^2\Vert H\Vert^2=\gamma(\epsilon+\Vert \sigma\Vert^2),\qquad \gamma=n+k-\sum_{j=1}^kn_j.
\end{align}
Let $L_1, \ldots, L_k$ be mutually orthogonal subspaces of $T_pM$ with $\dim L_j=n_j, j=1,\ldots,k.$ By choosing an orthonormal basis $\{e_1,\ldots,e_n,\ldots,e_{2m+1}\}$ at $p\in M$ such that $e_1,\ldots,e_n$ are tangent to $M$ at $p$, $e_{n+1}=\frac{H}{\Vert H\Vert}, e_{2m+1}=\xi$, and
\begin{align*}
L_1=&\Span\{e_1,\ldots,e_{n_1}\},\\
L_2=&\Span\{e_{n_1+1},\ldots,e_{n_1+n_2}\},\\
\vdots &\\
L_k=&\Span\{e_{n_1+\cdots+n_{k-1}+1},\ldots,e_{n_1+\cdots+n_k}\},
\end{align*}
then the equation (\ref{eq:315}) can be written as
\begin{align}\label{eq:316}\begin{split}
\left(\sum_{i=1}^{n}\sigma_{ii}^{n+1}\right)^2=\gamma\bigg(\epsilon+\sum_{i=1}^{n}(\sigma_{ii}^{n+1})^2&+\sum_{i\neq j}(\sigma_{ij}^{n+1})^2
+\sum_{r=n+2}^{2m+1}\sum_{i,j=1}^{n}(\sigma_{ii}^{r})^2\bigg).\end{split}
\end{align}
We set
\[\Delta_1=\{1,\ldots,n_1\},\ldots,\Delta_k=\{n_1+\cdots+n_{k-1}+1,\ldots,n_1+\cdots+n_k\}.\]
The equation \eqref{eq:316} can be rewritten in the form
\begin{align}\label{eq:317}\begin{split}
&\left(\sum_{i=1}^{\gamma+1}a_i\right)^2=\gamma\bigg(\epsilon+\sum_{i=1}^{\gamma+1}(a_i)^2+\sum_{i\neq j}(\sigma_{ij}^{n+1})^2+\sum_{r=n+2}^{2m+1}\sum_{i,j=1}^{n}(\sigma_{ii}^{r})^2\\
&-\hspace{-5mm}\sum_{2\leq \alpha_1\neq\beta_1\leq n_1}\hspace{-5mm}\sigma_{\alpha_1\alpha_1}^{n+1}\sigma_{\beta_1\beta_1}^{n+1}-\sum_{ \alpha_2\neq\beta_2}\sigma_{\alpha_2\alpha_2}^{n+1}\sigma_{\beta_2\beta_2}^{n+1}-\cdots-\sum_{\alpha_k\neq\beta_k}\sigma_{\alpha_k\alpha_k}^{n+1}\sigma_{\beta_k\beta_k}^{n+1}\bigg),\end{split}
\end{align}
where $\alpha_i, \beta_i\in\Delta_i, i=2, \ldots, k$ and
\begin{align*}
a_1&=\sigma_{11}^{n+1},
a_2=\sum_{i=2}^{n_1}\sigma_{ii}^{n+1},
a_3=\sum_{i=n_1+1}^{n_1+n_2}\sigma_{ii}^{n+1},\ldots,
a_{k+1}=\hspace{-5mm}\sum_{i=n_1+\cdots+n_{k-1}+1}^{n_1+\cdots+n_k}\hspace*{-5mm}\sigma_{ii}^{n+1},\\
a_{k+2}&=\sigma_{(n_1+\cdots+n_k+1)(n_1+\cdots+n_k+1)}^{n+1},\ldots,a_{\gamma+1}=\sigma_{nn}^{n+1}.
\end{align*}
By applying Lemma \ref{lem:cheninequality} to \eqref{eq:317}, we can obtain the following inequality
\begin{align}\label{eq:318}\begin{split}
\sum_{\alpha_1<\beta_1}\sigma_{\alpha_1\alpha_1}^{n+1}\sigma_{\beta_1\beta_1}^{n+1}+\sum_{ \alpha_2<\beta_2}\sigma_{\alpha_2\alpha_2}^{n+1}\sigma_{\beta_2\beta_2}^{n+1}+\cdots+\sum_{\alpha_k<\beta_k}\sigma_{\alpha_k\alpha_k}^{n+1}\sigma_{\beta_k\beta_k}^{n+1}\\
\geq\frac{\epsilon}{2}+\sum_{i<j}(\sigma_{ij}^{n+1})^2+\frac{1}{2}\sum_{r=n+2}^{2m+1}\sum_{i,j=1}^{n}(\sigma_{ij}^{r})^2,
\end{split}
\end{align}
where $\alpha_i, \beta_i\in\Delta_i, i=1, \ldots, k.$
From (\ref{eq:11}) and Gauss'equation, we see that
\begin{align}\label{eq:319}
\tau(L_j)=\dfrac{n_j(n_j-1)}{2}f_1+\sum_{r=n+1}^{2m+1}\sum_{\alpha_i<\beta_i}(\sigma_{\alpha_i\alpha_i}^{r}\sigma_{\beta_i\beta_i}^{r}-(\sigma_{\alpha_i\beta_i}^{r})^2),
\end{align}
where $\alpha_i, \beta_i\in\Delta_i, i=1, \ldots, k.$ In view of \eqref{eq:318} and \eqref{eq:319}, we obtain
\begin{align} \label{eq:320}\begin{split}
\sum_{i=1}^{k}\tau(L_i)\geq&\frac{\epsilon}{2}+\sum_{i=1}^{k}\dfrac{n_i(n_i-1)}{2}f_1\\
&+\frac{1}{2}\sum_{r=n+1}^{2m+1}\sum_{(\alpha,\beta)\notin\Delta^2}(\sigma_{\alpha\beta}^r)^2
+\frac{1}{2}\sum_{r=n+2}^{2m+1}\sum_{i=1}^{k}\bigg(\sum_{\alpha_i\in\Delta_i}\sigma_{\alpha_i\alpha_i}^r\bigg)^2\\
\geq&\frac{\epsilon}{2}+\sum_{i=1}^{k}\dfrac{n_i(n_i-1)}{2}f_1,
\end{split}
\end{align}
where $\Delta^2=(\Delta_1\times\Delta_1)\cup\ldots\cup(\Delta_k\times\Delta_k).$ From \eqref{eq:deltainf}, \eqref{eq:epsilon} and  \eqref{eq:320}, we can obtain \eqref{eq:311}. If the equality in \eqref{eq:311} holds at a point $p$, then the inequalities in \eqref{eq:318} and \eqref{eq:320} are actually equalities at $p$. In this case, by applying Lemma \ref{lem:cheninequality}, \eqref{eq:317}, \eqref{eq:318}, \eqref{eq:319} and \eqref{eq:320}, we also obtain \eqref{eq:312} and \eqref{eq:313}.
\end{proof}


\begin{thebibliography}{10}
\providecommand{\url}[1]{{#1}}
\providecommand{\urlprefix}{URL }
\expandafter\ifx\csname urlstyle\endcsname\relax
  \providecommand{\doi}[1]{DOI~\discretionary{}{}{}#1}\else
  \providecommand{\doi}{DOI~\discretionary{}{}{}\begingroup
  \urlstyle{rm}\Url}\fi

\bibitem{alegre.blair:GeneralizedSasakianspaceforms}
Alegre, P., Blair, D.E., Carriazo, A.: Generalized {S}asakian-space-forms.
\newblock Israel J. Math. \textbf{141}, 157--183 (2004).
\newblock \doi{10.1007/BF02772217}.
\newblock \urlprefix\url{http://dx.doi.org/10.1007/BF02772217}

\bibitem{AlegreCarriazoKimYoon:chenInequalityforGeneralizedSpaceForm}
Alegre, P., Carriazo, A., Kim, Y.H., Yoon, D.W.: B. {Y}. {C}hen's inequality
  for submanifolds of generalized space forms.
\newblock Indian J. Pure Appl. Math. \textbf{38}(3), 185--201 (2007)

\bibitem{carriazo:Generalizeddivided}
Carriazo, A., Mart{\'{\i}}n-Molina, V.: Generalized {$(\kappa,\mu)$}-space
  forms and {$D\sb a$}-homothetic deformations.
\newblock Balkan J. Geom. Appl. \textbf{16}(1), 37--47 (2011)

\bibitem{carriazo.Tripathi:Generalizedspaceforms}
Carriazo, A., Mart{\'{\i}}n~Molina, V., Tripathi, M.M.: Generalized
  {$(\kappa,\mu)$}-space forms.
\newblock Mediterr. J. Math. \textbf{10}(1), 475--496 (2013).
\newblock \doi{10.1007/s00009-012-0196-2}.
\newblock \urlprefix\url{http://dx.doi.org/10.1007/s00009-012-0196-2}

\bibitem{chen:Somepinching}
Chen, B.Y.: Some pinching and classification theorems for minimal submanifolds.
\newblock Arch. Math. (Basel) \textbf{60}(6), 568--578 (1993).
\newblock \doi{10.1007/BF01236084}.
\newblock \urlprefix\url{http://dx.doi.org/10.1007/BF01236084}

\bibitem{Chen:RelationsBetweenRicci}
Chen, B.Y.: Relations between {R}icci curvature and shape operator for
  submanifolds with arbitrary codimensions.
\newblock Glasg. Math. J. \textbf{41}(1), 33--41 (1999).
\newblock \doi{10.1017/S0017089599970271}.
\newblock \urlprefix\url{http://dx.doi.org/10.1017/S0017089599970271}

\bibitem{Chen:Somenewobstructionstominimal}
Chen, B.Y.: Some new obstructions to minimal and {L}agrangian isometric
  immersions.
\newblock Japan. J. Math. (N.S.) \textbf{26}(1), 105--127 (2000)

\bibitem{faghfouriGhaffarzade:invariant}
Faghfouri, M., Ghaffarzadeh, N.: Chen's inequality for invariant submanifolds
  in a generalized $(\kappa,\mu)$-space forms.
\newblock Glob. J. Adv. Res. Class. Mod. Geom. \textbf{4}(2), 86--101 (2015).
\newblock \urlprefix\url{http://gjarcmg.geometry-math-journal.ro/submission/}

\bibitem{faghfouriGhaffarzade:doublywarped}
Faghfouri, M., Ghaffarzadeh, N.: On doubly warped product submanifolds of
  generalized $(\kappa ,\mu )$-space forms.
\newblock Afrika Matematika \textbf{26}(7-8), 1443--1455 (2015).
\newblock \doi{10.1007/s13370-014-0299-y}.
\newblock \urlprefix\url{http://dx.doi.org/10.1007/s13370-014-0299-y}

\bibitem{faghfouri:ondoublyimmersion}
Faghfouri, M., Majidi, A.: On doubly warped product immersions.
\newblock Journal of Geometry \textbf{106}(2), 243--254 (2015).
\newblock \doi{10.1007/s00022-014-0245-z}.
\newblock \urlprefix\url{http://dx.doi.org/10.1007/s00022-014-0245-z}

\bibitem{TripathiHong:Riccicurvatureofsubmanifolds}
Hong, S., Tripathi, M.M.: Ricci curvature of submanifolds of sasakian space
  form.
\newblock Iran. J. Math. Sci. Inform. \textbf{1}(2), 31--51 (2006)

\bibitem{Kim:Choi:ABasicInequalityCosymplectic}
Kim, J.S., Choi, J.: A basic inequality for submanifolds in a cosymplectic
  space form.
\newblock Int. J. Math. Math. Sci. (9), 539--547 (2003).
\newblock \doi{10.1155/S0161171203202027}.
\newblock \urlprefix\url{http://dx.doi.org/10.1155/S0161171203202027}

\bibitem{Ion:Riccicurvatureofsubmanifolds}
Mihai, I.: Ricci curvature of submanifolds in {S}asakian space forms.
\newblock J. Aust. Math. Soc. \textbf{72}(2), 247--256 (2002).
\newblock \doi{10.1017/S1446788700003888}.
\newblock \urlprefix\url{http://dx.doi.org/10.1017/S1446788700003888}

\bibitem{NashJohn:TheImbeddingProblem}
Nash, J.: The imbedding problem for {R}iemannian manifolds.
\newblock Ann. of Math. (2) \textbf{63}, 20--63 (1956)

\bibitem{tripathi:c-totallyrealwarped}
Tripathi, M.M.: {$C$}-totally real warped product submanifolds.
\newblock An. \c Stiin\c t. Univ. Al. I. Cuza Ia\c si. Mat. (N.S.)
  \textbf{58}(2), 417--436 (2012)

\bibitem{TripathiKin:C-totalyKMcontact}
Tripathi, M.M., Kim, J.S.: {$C$}-totally real submanifolds in
  {$(\kappa,\mu)$}-contact space forms.
\newblock Bull. Austral. Math. Soc. \textbf{67}(1), 51--65 (2003).
\newblock \doi{10.1017/S0004972700033517}.
\newblock \urlprefix\url{http://dx.doi.org/10.1017/S0004972700033517}

\bibitem{yamaguchi:c-totallyrealsubmanifolds}
Yamaguchi, S., Kon, M., Ikawa, T.: {$C$}-totally real submanifolds.
\newblock J. Differential Geometry \textbf{11}(1), 59--64 (1976)

\bibitem{yano:Antiinvariantsubmanifolds}
Yano, K., Kon, M.: Anti-invariant submanifolds.
\newblock Marcel Dekker Inc., New York (1976).
\newblock Lecture Notes in Pure and Applied Mathematics, No. 21

\end{thebibliography}

\end{document}